\newcommand{\N}{\mathbb{N}}
\newcommand{\mcH}{\mathrel{\mathcal{H}}}
\newcommand{\mcR}{\mathrel{\mathcal{R}}}
\newcommand{\mcL}{\mathrel{\mathcal{L}}}
\newcommand{\mcD}{\mathrel{\mathcal{D}}}
\newcommand{\mcO}{\mathcal{O}}
\newcommand{\up}{\uparrow\!}
\newcommand{\down}{\downarrow\!}
\DeclareMathOperator{\dom}{dom}
\newtheorem{prop}{Proposition}[section]
\newtheorem{thm}[prop]{Theorem}
\newtheorem{cor}[prop]{Corollary}
\newtheorem{lem}[prop]{Lemma}
\theoremstyle{definition}
\newtheorem{rem}[prop]{Remark}
\newlist{thmenum}{enumerate}{10}
\setlist[thmenum,1]{label=\textnormal{(\alph*)}}
\setlist[thmenum,2]{label=\textnormal{(\roman*)}}
\begin{document}

\title{On the Inverse Hull of a Markov Shift}

\author{Aria Beaupr{\'e}}
\address{Harvey Mudd College\\
301 Platt Blvd\\
Claremont, CA 91711}
\email{abeaupre@g.hmc.edu}

\author{Anthony Dickson}
\address{Youngstown State University\\
1 University Plaza\\
Youngstown, OH 44555}
\email{ajdickson@student.ysu.edu}

\author{David Milan}
\address{Department of Mathematics\\
The University of Texas at Tyler\\
3900 University Boulevard\\
Tyler, TX 75799}
\email{dmilan@uttyler.edu}

\author{Christin Sum}
\address{California State University-Long Beach\\
1250 Bellflower Blvd\\
Long Beach, CA 90840}

\email{sum.christin@gmail.com}

\thanks{This research was supported by an NSF grant (DMS-1659221).}

\date{\today}
\subjclass[2010]{20M18, 37B10}

\begin{abstract} In this paper we provide an abstract characterization of the inverse hulls of semigroups associated with Markov shifts. As an application of the characterization we give an example of Markov shifts that are not conjugate, but have isomorphic inverse hulls. \end{abstract}

\maketitle

\section{Introduction}

We study inverse semigroups associated with Markov shift spaces. In \cite{StarlingShifts}, Starling defines an inverse semigroup associated with a one-sided subshift. He shows that the Carlsen-Matsumoto $C^*$-algebra associated with the subshift is generated by the inverse semigroup. He also gives a decomposition of the $C^*$-algebra as a partial crossed product by a free group. Recently, Exel and Steinberg \cite{ExelSteinberg} defined the inverse semigroups $H(S)$, associated with any $0$-left cancellative semigroup $S$, called the inverse hull of $S$. This collection is significant as it contains many of the examples of inverse semigroups that appear in the study of $C^*$-algebras generated by partial isometries. Graph inverse semigroups are included in this collection as well as semigroups associated with left cancellative categories. Also, the inverse semigroups Starling associates to one-sided shifts are inverse hulls.

One major challenge in working with the inverse hull of a $0$-left cancellative semigroup is to obtain a useful description of the semilattice of idempotents. Because $H(S)$ is defined to be the inverse semigroup generated by a set of partial bijections, it can be difficult to determine the possible sets, called \emph{constructible sets} in \cite{ExelSteinberg}, on which the idempotents of $H(S)$ act as identities. In section 3, we give a thorough description of the semilattice of the inverse semigroup associated with a Markov subshift. There is a set $\mcO$ of mutually orthogonal idempotents such that every idempotent is comparable to some element in $\mcO$. The idempotents strictly above $\mcO$ (together with 0) form a subsemigroup of the semilattice, with the additional property that each such idempotent is determined uniquely by the idempotents in $\mcO$ above which it sits. We describe a number of additional properties of this subsemigroup.

In section 4 we state a set of axioms on an inverse semigroup $H$ that are equivalent to $H$ being isomorphic to the inverse hull of the language of a Markov shift. We find that it is possible to axiomatize the set $\mcO$ appearing in such an inverse semigroup, although there is not necessarily a unique choice for $\mcO$. In section 5, we take advantage of this lack of uniqueness when we use our characterization to show that different Markov shifts can give rise to isomorphic inverse hulls. In particular we find two Markov shifts with different entropies and isomorphic inverse hulls. We also give an example of two conjugate Markov shifts whose inverse hulls are not isomorphic. Finally, we state a conjecture that if the inverse hulls of two Markov shifts are isomorphic, then their alphabets must have the same size.

\section{Preliminaries}
An \textit{inverse semigroup} is a semigroup $S$ such that for each $s$ in $S$, there 
exists a unique $s^*$ in $S$ such that 
\[
	s = s s^* s\quad \text{and}\quad s^* = s^* s s^*.
\]
The elements $e \in S$ satisfying $e^2 = e$ (and hence $e^* = e$) are called \emph{idempotents}. The set of all idempotents in $S$ is denoted by $E(S)$.

There is a natural partial order on $S$ defined by $s \leq t$ if $s = te$ for some idempotent $e$. Note that the subsemigroup $E(S)$ of idempotents is commutative, and hence forms a meet semilattice under the natural partial order with $e \wedge f= ef$ for $e,f$ in $E(S)$.

There are a number of useful relations known as Green's relations defined on a semigroup. For inverse semigroups, these relations take the following form: we have $s \mcL t$ if and only if $s^*s = t^*t$, $s \mcR t$ if and only if $ss^* = tt^*$, and $\mcH \,=\, \mcL \cap \mcR$. If $S$ has the property that for all $s,t$ in $S$, $s \mcH t$ implies $s = t$, then $S$ is said to be \emph{combinatorial}. One can also prove that $S$ is combinatorial provided $s^* s = s s^*$ implies $s$ is idempotent for all $s$ in $S$. Finally, $s \mcD t$ if and only if there exists $a,b \in S$ such that $a^*a = t^*t$, $aa^* = s^*s$, $b^*b = tt^*$, $bb^* = ss^*$, and $t = b^* s a$. For $e,f \in E(S)$ we have $e \mcD f$ if and only if there exists $a \in S$ with $e = a^*a$ and $f = aa^*$. 

An important example of an inverse semigroup is the semigroup $I(X)$ of partial bijections on a set $X$. A \emph{partial bijection} on $X$ is a bijection $g: A \to B$ with $A,B \subseteq X$. If $g \in I(X)$ with domain $A$ and range $B$ and $f \in I(X)$ with domain $C$ and range $D$, then the product $fg$ is the composition of the functions on the largest possible domain. That is, $fg$ is the bijection of $g^{-1}(B \cap C)$ onto $f(B \cap C)$. The map with empty domain is denoted by $0$. The inverse of $f$ in $I(X)$ is given by $f^{-1}$.

We will now outline the construction of the inverse hull of a left cancellative semigroup $S$ with zero. For more details, see \cite{ExelSteinberg}. For each $s$ in $S$, define $\theta_s$ to be the partial bijection on $S - \{0\}$ with domain 
\[
	F_s = \{ s \in S : sx \neq 0 \},
\]
and range
\[
	E_s = \{y \in S : y = sx \neq 0 \text{ for some } x \in S \}.
\]
Then the \emph{inverse hull} of $S$ is defined to be the inverse semigroup generated by $\{ \theta_s : s \in S \text{ and } s \neq 0 \}$.
 
\section{The semilattice of the inverse hull of a Markov shift}

In this section we consider the inverse hull of a Markov shift. Our goals are to develop a thorough understanding of the semilattice and to find properties that characterize these inverse semigroups. 

Let $A$ be a finite alphabet and let $T = \{ T_{a,b} \}_{a,b \in A}$ be a matrix such that $T_{a,b} \in \{0,1\}$ for each $a,b \in A$. We refer to $T$ as a \emph{Markov transition matrix}. The set of infinite words $a_1 a_2 a_3 \dots$ such that 
\[
	T_{a_i, a_{i+1}} = 1 \text{ for all $i$ in $\mathbb{N}$}
\]
is called the Markov subshift associated with the transition matrix $T$. Let $L_T$ consist of all finite words occurring as subwords of elements of the Markov subshift of $T$. Notice that if there is a row in $T$ consisting only of zeros, then no word in $L_T$ will contain the letter associated with that row. Therefore we will always assume that $T$ contains no zero rows. 

Let $S_T = L_T \cup \{0\}$. For $x,y$ in $L_T$ define
\[ x*y = \left\{\begin{array}{ll}
        xy & \mbox{if $xy \in L_T$} \\
        0 & \mbox{otherwise}
   \end{array} \right. \]
where $xy$ denotes the concatenation of words $x$ and $y$. Then $S_T$ is a $0$-cancellative semigroup under this operation. Note that $S_T$ does not contain a multiplicative identity. It will sometimes be useful to adjoin one. We denote by $S_T^{1}$ the set $S_T$ with an identity adjoined. Similarly, $L_T^{1} = L_T \cup \{1\}$.

Recall that $H(S_T)$ is the inverse semigroup with $0$ generated by the maps $\theta_w$ such that $w \in L_T$. It will be useful to develop some basic properties about products of such maps.

\begin{lem}\label{lem:hullproducts} For $u,v$ in $L_T$ we have 
\[ \theta_u \theta_v = \begin{cases}
		\theta_{uv} & \text{ if } uv \in L_T, \\
		0 & \text{ otherwise. }
	\end{cases}
\]
	
Also,
\[ \theta_u^{-1} \theta_v = \begin{cases}
		\theta_{v'} & \text{ if } v = uv' \text{ for some } v' \in L_T, \\
		\theta_{u'}^{-1} & \text{ if } u = vu' \text{ for some } u' \in L_T, \\
		\theta_u^{-1} \theta_u & \text{ if } u = v \text{, and} \\
		0 & \text{ otherwise. }
		\end{cases}
\]

\end{lem}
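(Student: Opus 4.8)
The plan is to unwind the definitions and let the Markov structure do the work. Each $\theta_w$ is the partial bijection $x \mapsto wx$ with domain $F_w = \{x \in L_T : wx \in L_T\}$, so $\theta_w^{-1}$ is the map deleting the prefix $w$, defined on $E_w = \{wx : x \in L_T,\ wx \in L_T\}$. The key preliminary observation I would record is that, under the no-zero-rows hypothesis, $L_T$ consists exactly of the nonempty \emph{admissible} words $a_1 \cdots a_n$ (those with $T_{a_i,a_{i+1}} = 1$ for all $i$): any admissible word extends to the right to an infinite admissible sequence and hence occurs in the subshift. Consequently, for $p,q \in L_T$ one has $pq \in L_T$ if and only if $T_{\mathrm{last}(p),\mathrm{first}(q)} = 1$, a memory-one criterion that is the crux of everything below.

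For the first identity I would compute the composite directly: for $x$ in the relevant domain, $(\theta_u \theta_v)(x) = \theta_u(vx) = u(vx) = (uv)x$, so $\theta_u \theta_v$ agrees with $\theta_{uv}$ wherever both are defined (equivalently, $w \mapsto \theta_w$ is a homomorphism into $I(S_T - \{0\})$). It then remains to match domains. If $uv \notin L_T$, any $x$ in the domain would force $uvx \in L_T$, hence $uv \in L_T$ as a prefix of an admissible word, a contradiction; so the composite is $0$. If $uv \in L_T$, the domain of $\theta_u \theta_v$ is $\{x : vx \in L_T \text{ and } u(vx) \in L_T\}$, which equals $F_{uv} = \{x : (uv)x \in L_T\}$ since $uvx \in L_T$ already entails $vx \in L_T$. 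This yields the first displayed formula.

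For the second identity I would split into four cases according to the prefix relationship between $u$ and $v$; these are exhaustive and mutually exclusive precisely because $L_T$ contains only nonempty words. A point in the domain of $\theta_u^{-1}\theta_v$ is an $x$ with $vx \in L_T$ and $vx \in E_u$, i.e. $vx = uw$ for some $w \in L_T$, and then $(\theta_u^{-1}\theta_v)(x) = w$. Solvability of $vx = uw$ forces $u$ and $v$ to be prefix-comparable, which disposes of the ``otherwise $=0$'' case at once. If $v = uv'$ with $v' \in L_T$, then $w = v'x$ and the composite acts as $\theta_{v'}$; the case $u = vu'$ is the inverse of this by symmetry, using $\theta_u^{-1}\theta_v = (\theta_v^{-1}\theta_u)^{-1}$; and $u = v$ is simply the definition of the idempotent $\theta_u^{-1}\theta_u$.

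The hard part is the domain check in the prefix case, and this is exactly where the Markov property is indispensable. In the case $v = uv'$ the composite $\theta_u^{-1}\theta_v$ has domain $\{x : uv'x \in L_T\}$, whereas $\theta_{v'}$ has domain $\{x : v'x \in L_T\}$, and I must show these coincide. The inclusion $\subseteq$ is automatic (subwords of admissible words are admissible), but the reverse inclusion fails for general subshifts; here it holds because, with $uv' \in L_T$ already known, the memory-one criterion reduces both conditions $uv'x \in L_T$ and $v'x \in L_T$ to the single junction requirement $T_{\mathrm{last}(v'),\mathrm{first}(x)} = 1$ (together with $x \in L_T$). Carrying out this equivalence carefully, and noting that the symmetric case and the two $0$-cases then follow formally, is the substance of the argument; the remainder is routine bookkeeping of composing partial bijections.
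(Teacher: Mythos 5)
Your proof is correct and follows essentially the same route as the paper's: a direct case-by-case computation of the domains and values of the composed partial bijections, with the same four-way split for $\theta_u^{-1}\theta_v$ and the same reduction of the prefix case to the domain identity $\{x : uv'x \in L_T\} = \{x : v'x \in L_T\}$. The only difference is one of thoroughness: you prove the first assertion (which the paper leaves to the reader) and you isolate and justify the memory-one criterion --- that under the no-zero-rows hypothesis $L_T$ is exactly the set of admissible words, so concatenability depends only on the junction entry of $T$ --- a fact the paper's proof uses silently when it asserts $\dom \theta_v = \dom \theta_{v'}$.
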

\begin{proof} We leave the straightforward proof of the first assertion to the reader. For the second, suppose first that $v = uv'$. It follows that
\[
\dom \theta_u^{-1} \theta_v = \dom \theta_v = \dom \theta_{v'},
\]
and that for any $x \in \dom \theta_{v'}$ we have $\theta_u^{-1} \theta_v(x) = v'x = \theta_{v'}(x)$. Thus $\theta_u^{-1} \theta_v = \theta_{v'}$. The case where $u = vu'$ for some $u'$ in $L_T$ is similar, and the case where $u = v$ is vacuously true. Finally, suppose that there are no $x,y \in L_T^{1}$ such that $vx = uy$. Note that if a word $x$ lies in $\dom \theta_u^{-1} \theta_v$ then there exists some $y \in L_T$ such that $vx = uy$. Thus $\dom(\theta_u^{-1} \theta_v) = \emptyset$ and $\theta_u^{-1} \theta_v = 0$.
\end{proof}

\begin{rem}\label{rem:cancellation} One useful consequence of Lemma \ref{lem:hullproducts} is that for $u,v \in L_T$, 
\begin{equation*}
	\theta_u^{-1} \theta_u \theta_v = \begin{cases}
		\theta_v & \text{ if } uv \in L_T \\
	 	0 & \text{ otherwise }
	 \end{cases}
\end{equation*}
\end{rem}

We can also use the lemma to find a general form for elements in $H(S_T)$. We omit most of the proof of the following theorem, since similar results have appeared in both \cite{StarlingShifts} and \cite{ExelSteinberg}. We point out that these forms are not unique.

\begin{thm} All nonzero elements of $H(S_T)$ are of the form 
\[
\theta_s\theta^{-1}_{x_1}\theta_{x_1}\dots\theta^{-1}_{x_n}\theta_{x_n}\theta^{-1}_w,
\]
for some $n \geq 1,$ $x_i \in A,$ and $s,w \in L_T^{1}$.
\end{thm}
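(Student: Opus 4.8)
The plan is to reduce an arbitrary product of generators to the stated form by induction on the number of factors, and then to exploit the Markov property to force the idempotents in the middle to be indexed by single letters. Since $H(S_T)$ is generated by the maps $\theta_w$ with $w\in L_T$, every nonzero element is a finite product $\theta_{w_1}^{\epsilon_1}\cdots\theta_{w_k}^{\epsilon_k}$ with each $\epsilon_i\in\{1,-1\}$. My first move is to collapse maximal runs of equal sign: by the first part of Lemma \ref{lem:hullproducts} a run of positive factors multiplies to a single $\theta_u$ (or to $0$), and dually, since $\theta_u^{-1}\theta_v^{-1}=(\theta_v\theta_u)^{-1}$, a run of negative factors multiplies to a single $\theta_u^{-1}$ (or to $0$). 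After this we have an alternating word, whose only junctions are of type $\theta_u\theta_v^{-1}$ or $\theta_u^{-1}\theta_v$.

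Next I would run an induction on length, maintaining the invariant that the element has the form $\theta_s\,E\,\theta_v^{-1}$ with $s,v\in L_T^{1}$ and $E$ a product of idempotents $\theta_{u_i}^{-1}\theta_{u_i}$. To append one more generator, the interesting case is a positive $\theta_w$ meeting the trailing $\theta_v^{-1}$: the second part of Lemma \ref{lem:hullproducts} rewrites $\theta_v^{-1}\theta_w$ as one of $\theta_{w'}$, $\theta_{v'}^{-1}$, the idempotent $\theta_v^{-1}\theta_v$, or $0$. In the first case Remark \ref{rem:cancellation} lets the surviving positive absorb (or annihilate against) the idempotent block $E$; in the second and third cases the invariant is immediately restored with a shorter word or one extra idempotent factor; appending a negative generator simply lengthens $v$ on the left via the dual of the first part of the Lemma. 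The edge cases $s=1$ or $v=1$ are handled by treating $\theta_1$ as the identity and re-expressing a bare $\theta_s$ or $\theta_v^{-1}$ using $t=t(t^\ast t)$. This is the portion the paper defers to \cite{StarlingShifts, ExelSteinberg}, and the only real work here is the routine but tedious case analysis that keeps positives on the left, negatives on the right, and the idempotents contiguous.

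The step genuinely special to Markov shifts, and the one I would treat with care, is turning each middle idempotent $\theta_u^{-1}\theta_u$ into one indexed by a single letter of $A$. The idempotent $\theta_u^{-1}\theta_u$ is the partial identity on $\dom\theta_u=\{z:uz\in L_T\}$. Because $L_T$ is the language of a Markov shift and $u$ already lies in $L_T$, the condition $uz\in L_T$ depends only on the transition from the last letter $x$ of $u$ to the first letter of $z$; hence $\dom\theta_u=\dom\theta_x=\{z:xz\in L_T\}$, and therefore $\theta_u^{-1}\theta_u=\theta_x^{-1}\theta_x$. Replacing every $\theta_{u_i}^{-1}\theta_{u_i}$ by its last-letter idempotent $\theta_{x_i}^{-1}\theta_{x_i}$ with $x_i\in A$ puts $E$ into the required shape. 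To guarantee $n\geq 1$, I would, if no idempotent has survived, insert the range idempotent $\theta_v^{-1}\theta_v$ via $\theta_s\theta_v^{-1}=\theta_s(\theta_v^{-1}\theta_v)\theta_v^{-1}$ (and likewise for a bare positive or negative) before applying the last-letter reduction. The main obstacle is thus not any single deep step but the combination of disciplined bookkeeping in the induction with the Markov observation that collapses word-indexed idempotents to letter-indexed ones.
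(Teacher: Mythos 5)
Your proof is correct, but it takes a more self-contained route than the paper's, which contains almost no argument of its own: the paper simply cites \cite[Theorem 7.11]{ExelSteinberg} for the general normal form $\theta_s(\theta_{w_1}^{-1}\theta_{w_1}\cdots\theta_{w_n}^{-1}\theta_{w_n})\theta_w^{-1}$ and then records exactly the two Markov-specific adjustments you also make, namely the last-letter collapse $\theta_u^{-1}\theta_u = \theta_x^{-1}\theta_x$ (asserted in one line there; your computation $\dom\theta_u = \{z : uz \in L_T\} = \dom\theta_x$ is the missing justification) and the insertion $\theta_s\theta_w^{-1} = \theta_s(\theta_s^{-1}\theta_s)\cdots(\theta_w^{-1}\theta_w)\theta_w^{-1}$ that guarantees $n \geq 1$. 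What you add is a direct induction replacing the citation, and your induction is genuinely simpler than the argument behind the cited general theorem precisely because of the Markov property: your absorption step, where a surviving positive passes through the idempotent block via Remark~\ref{rem:cancellation} (so $E\theta_{w'}$ is $\theta_{w'}$ or $0$ outright), fails in a general $0$-left cancellative semigroup, where such a product only restricts the domain --- this is why Exel and Steinberg must develop the machinery of constructible sets, and why the general normal form cannot discard the middle idempotents the way your invariant temporarily does. So your route buys an elementary, citation-free proof tailored to Markov languages, at the cost of the case bookkeeping you acknowledge; the paper's route buys brevity and places the result inside the general theory of inverse hulls. The one detail worth making explicit in a write-up is that in your $w = vw'$ junction case the whole block $E$ is consumed and the element reverts to a bare $\theta_{sw'}$, so the invariant must permit an empty idempotent block, with $n \geq 1$ restored only at the end by your insertion step --- which is indeed how you arrange it.
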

\begin{proof}
Note that this form is nearly identical to the one given in \cite[Theorem 7.11]{ExelSteinberg}. There are two small differences to consider. The first is that the idempotents in the middle of the product are associated with the letters of $A$, rather than arbitrary words. We do not lose generality, since for $w \in L_T$, $\theta_w^{-1} \theta_w = \theta_a^{-1} \theta_a$ where $a$ is the last letter of $w$. Second, we do not assume that the product of idempotents in the middle includes $\theta_s^{-1} \theta_s$ or $\theta_w^{-1} \theta_w$. Of course, this is not consequential since 
\[
\theta_s\theta^{-1}_{x_1}\theta_{x_1}\dots\theta^{-1}_{x_n}\theta_{x_n}\theta^{-1}_w = \theta_s (\theta_s^{-1} \theta_s) \theta^{-1}_{x_1}\theta_{x_1}\dots\theta^{-1}_{x_n}\theta_{x_n}(\theta_w^{-1} \theta_w)\theta^{-1}_w.
\]
\end{proof}

It follows quickly that the nonzero idempotents of $H(S_T)$ are of the form 
\[
\theta_s\theta^{-1}_{x_1}\theta_{x_1}\dots\theta^{-1}_{x_n}\theta_{x_n}\theta^{-1}_s
\]
where $n \geq 1,$ $x_i \in A,$ and $s \in L_T^{1}$. Based on this, it is natural to distinguish the idempotents for which $s=1$ from those for which $s \neq 1$. As we will see, the first group of idempotents sits above the second in the semilattice. In between the two groups is the set $\mcO = \{ \theta_a \theta_a^{-1} : a \in A\}$ which are the range idempotents associated with letters. 

Note that $\theta_a \theta_a^{-1}$ is the identity map on the set 
\[
aL_T := \{u \in L_T : u = av \text{ for some } v \in L_T \}.
\]  
Choose $a,b \in A$ with $a \neq b$. Since $aL_T \cap bL_T = \emptyset$, we have $\theta_a \theta_a^{-1} \theta_b \theta_b^{-1} = 0$. Thus we say that $\mcO$ consists of \emph{mutually orthogonal} idempotents. Now define $\mcO^{\up} = \{\alpha \in E(H(S_T)) : \alpha \geq \theta_a \theta_a^{-1} \text{ for some } a \in A \}$ and $\mcO^{\down} = \{\alpha \in E(H(S_T)) : \alpha \leq \theta_a \theta_a^{-1} \text{ for some } a \in A \}$. 

\begin{prop}\label{prop:idempotents} Let $H(S_T)$ be the inverse hull of the semigroup $S_T$ associated with a Markov transition matrix $T$. Set $\mcO = \{ \theta_a \theta_a^{-1} : a \in A\}$. Then
$\mcO^{\up} - \mcO = \{ \theta^{-1}_{x_1}\theta_{x_1}\dots\theta^{-1}_{x_n}\theta_{x_n} : x_i \in A\} - \{0\}$ and
\[
\mcO^{\down} = \{ \theta_s\theta^{-1}_{x_1}\theta_{x_1}\dots\theta^{-1}_{x_n}\theta_{x_n}\theta^{-1}_s : x_i \in A, s \neq 1 \} \cup \{0\}.
\]

\end{prop}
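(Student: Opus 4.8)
The plan is to work inside $I(S_T - \{0\})$ and to identify each idempotent $e$ with the set $D_e \subseteq S_T - \{0\}$ on which it restricts to the identity. Since the natural order on idempotents of an inverse semigroup of partial bijections is exactly inclusion of these fixed-point sets, the whole proposition reduces to computing the $D_e$. Using the form $e = \theta_s\theta_{x_1}^{-1}\theta_{x_1}\cdots\theta_{x_n}^{-1}\theta_{x_n}\theta_s^{-1}$ recorded for the idempotents above, I would first compute the fixed-point set of the middle factor $m = \theta_{x_1}^{-1}\theta_{x_1}\cdots\theta_{x_n}^{-1}\theta_{x_n}$. As a product of the partial identities $\theta_{x_i}^{-1}\theta_{x_i}$ (each the identity on $\dom\theta_{x_i} = \{z \in L_T : x_i z \in L_T\}$), $m$ is the identity on the intersection $M = \{z \in L_T : x_i z \in L_T \text{ for all } i\}$. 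Since $x_i z \in L_T$ depends only on $T_{x_i, b}$ with $b$ the first letter of $z$, the set $M$ is precisely the set of words whose first letter lies in $B := \{b \in A : T_{x_i,b} = 1 \text{ for all } i\}$. Conjugating, $e = \theta_s m \theta_s^{-1}$ is the identity on $sM := \{sz \in L_T : z \in M\}$.

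Next I would extract the single invariant that drives everything: the minimal length of a word in $D_e$. Using that $T$ has no zero rows (so every letter, and every word allowed by $T$, really occurs in $L_T$), whenever $m \neq 0$ the set $B$ is nonempty and $M$ contains the single letters of $B$; consequently, if $sM \neq \emptyset$ it contains a word $sb$ of length exactly $|s| + 1$, while every word of $sM$ has length at least $|s|+1$. Hence for a nonzero idempotent $e$ the quantity $\ell(e) := \min\{\,|w| : w \in D_e\,\}$ equals $|s|+1$ in \emph{every} representation; in particular $|s|$ is an invariant of $e$, and $e$ admits a representation with $s = 1$ if and only if $\ell(e) = 1$, that is, if and only if $D_e$ contains a single letter. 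This is the step I expect to be the main obstacle, since it is exactly what tames the non-uniqueness of the chosen forms.

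With this in hand the rest is bookkeeping about the sets $aL_T$, which are nonempty, pairwise disjoint for distinct $a$, and contain no single letters. For the first identity: if $s = 1$ and $e = m \neq 0$, then $bL_T \subseteq M$ for each $b \in B$, so $e \geq \theta_b\theta_b^{-1}$ and $e \in \mcO^{\up}$; and $M$ contains a single letter while no $cL_T$ does, so $e \notin \mcO$. Conversely, given $e \in \mcO^{\up} - \mcO$, if $\ell(e) \geq 2$ then $D_e = sM \subseteq cL_T$ (where $c$ is the first letter of $s$), so $e \leq \theta_c\theta_c^{-1}$; combined with $e \geq \theta_a\theta_a^{-1}$ this gives $aL_T \subseteq cL_T$, forcing $a = c$ and then $e = \theta_a\theta_a^{-1} \in \mcO$, a contradiction. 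Hence $\ell(e) = 1$, so $e$ has an $s = 1$ representation and lies in the stated set.

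For the second identity, the forward inclusion is immediate: if $s \neq 1$ then every word of $sM$ has length at least two and begins with the first letter $c$ of $s$, so $D_e \subseteq cL_T$ and $e \leq \theta_c\theta_c^{-1} \in \mcO$; together with $0 \in \mcO^{\down}$ this shows the right-hand set is contained in $\mcO^{\down}$. Conversely, if $e \in \mcO^{\down}$ is nonzero then $D_e \subseteq aL_T$ for some $a$, so $D_e$ contains no single letter, whence $\ell(e) \geq 2$ and $e$ admits no $s = 1$ representation; by the invariant it must then be written with $s \neq 1$, placing it in the right-hand set. Note that this also accounts for each $\theta_a\theta_a^{-1}$, which has $\ell = 2$ and is realized with $s = a$ and $m = \theta_a^{-1}\theta_a$, since $\theta_a(\theta_a^{-1}\theta_a)\theta_a^{-1} = \theta_a\theta_a^{-1}$.
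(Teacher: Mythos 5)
Your proof is correct, and it reaches the proposition by a genuinely different organizing idea than the paper's. The paper argues directly on an arbitrary representation $\tau = \theta_s\theta^{-1}_{x_1}\theta_{x_1}\cdots\theta^{-1}_{x_n}\theta_{x_n}\theta^{-1}_s$: for $\mcO^{\up}-\mcO$ it derives $s=1$ from the strict inequality $\theta_s\theta_s^{-1} \geq \tau > \theta_a\theta_a^{-1}$ (since $aL_T \subseteq sL_T$ plus the no-zero-rows hypothesis forces $s=a$, contradicting strictness), and for $\mcO^{\down}$ it does not rule out $s=1$ at all but shows that in that case $\tau$ collapses to $\theta_a\theta_a^{-1} = \theta_a(\theta_a^{-1}\theta_a)\theta_a^{-1}$, which it then rewrites in the required form with $s=a\neq 1$. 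You instead extract the minimal-length invariant $\ell(e)=\min\{|w| : w\in D_e\}$ and prove $\ell(e)=|s|+1$ in \emph{every} representation, so that $|s|$ --- in particular whether $s=1$ --- is representation-independent; this handles both identities uniformly, shows the paper's $s=1$ case in the $\mcO^{\down}$ argument is actually vacuous, and yields the (unneeded but clarifying) stronger fact that $|s|$ is a well-defined invariant of the idempotent. Both proofs rest on the same raw computations: $D_e = sM$ with $M=\bigcup_{b\in B} bL_T^{1}$ (your appeal to the one-step Markov property to get $bL_T\subseteq M$ is exactly right and worth making explicit, as you did), no zero rows giving $aL_T\neq\emptyset$, and the fact that the sets $aL_T$ are pairwise disjoint and contain no single letters. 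One point in your favor: your bound $D_e \subseteq cL_T$ with $c$ the \emph{first} letter of $s$ is the correct one; the paper's proof of the $\mcO^{\down}$ inclusion invokes the \emph{last} letter $l(s)$, which is a slip, since with $\theta_s(x)=sx$ one has $\dom\tau \subseteq sL_T \subseteq cL_T$ for the first letter $c$ (the last letter suffices for the letter-count argument only when $|s|=1$).
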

\begin{proof}
We start with the set $\mcO^{\up} - \mcO$. As distinct idempotents of $\mcO$ are incomparable, note that 
\[
	\mcO^{\up} - \mcO = \{\tau \in E(H(S_T)) : \tau > \theta_a \theta_a^{-1} \text{ for some } a \in A \}.
\]

Let $\tau = \theta^{-1}_{x_1}\theta_{x_1}\dots\theta^{-1}_{x_n}\theta_{x_n}$ be nonzero, where $x_i \in A$ for $1 \leq i \leq n$. Since $\tau \neq 0$, there is a letter $a$ in $A$ such that $T_{x_i, a} = 1$ for all $1 \leq i \leq n$. So $x_i a \in L_T$ for each $i$. It follows that
\[
 \dom(\theta_a \theta_a^{-1}) = aL_T \subset \{ w \in L_T : x_i w \in L_T \text{ for all } i \} = \dom \tau.
\]
The above inclusion is proper because $a \in \dom \tau$ while $aL_T$ does not contain letters. Thus $\tau \in \mcO^{\up} - \mcO$.  

Now suppose that $\tau > \theta_a \theta_a^{-1}$ for some $a \in L_T$. Then $\tau \neq 0$ and we can write $\tau = \theta_s\theta^{-1}_{x_1}\theta_{x_1}\dots\theta^{-1}_{x_n}\theta_{x_n}\theta^{-1}_s$ where $x_i \in A$ and $s \in L_T^{1}$. Suppose that $s \neq 1$. Then $\theta_s \theta_s^{-1} > \theta_a \theta_a^{-1}$. It follows that $aL_T \subseteq sL_T$. By our assumption that the row corresponding to $a$ in $T$ is nonzero, we know that $a L_T \neq \emptyset$. So there is some $b \in A$ such that $ab = sw$ for some word $w \in L_T$ and thus $a = sw'$ for some $w' \in L_T^{1}$. Thus $s = a$ and $w' = 1$. But this contradicts the inequality $\theta_s \theta_s^{-1} > \theta_a \theta_a^{-1}$. Therefore $s = 1$ and $0 \neq \tau = \theta^{-1}_{x_1}\theta_{x_1}\dots\theta^{-1}_{x_n}\theta_{x_n}$.

Next we consider the set $\mcO^{\down}$. Let $\tau = \theta_s\theta^{-1}_{x_1}\theta_{x_1}\dots\theta^{-1}_{x_n}\theta_{x_n}\theta^{-1}_s$ where $s \neq 1$. Note that $\dom \alpha \subseteq sL \subseteq \dom(\theta_{l(s)} \theta_{l(s)}^{-1})$ where $l(s)$ denotes the last letter of $s$. Thus $\tau \leq \theta_{l(s)} \theta_{l(s)}^{-1}$. Conversely, suppose that $0 \neq \tau \leq \theta_a \theta_a^{-1}$ for some $a \in A$ and write $\tau = \theta_s\theta^{-1}_{x_1}\theta_{x_1}\dots\theta^{-1}_{x_n}\theta_{x_n}\theta^{-1}_s$. If $s \neq 1$ we are done, so suppose $s=1$. Then we have $\theta_{x_i}^{-1} \theta_{x_i} \theta_a \theta_a^{-1} \neq 0$ for all $i$. But one can quickly check that this is equivalent to $\theta_{x_i}^{-1} \theta_{x_i} \theta_a \theta_a^{-1} = \theta_a \theta_a^{-1}$. Therefore $\tau = \tau \, \theta_a \theta_a^{-1} = \theta_a \theta_a^{-1} = \theta_a (\theta_a^{-1} \theta_a) \theta_a^{-1}$. Since $a \neq 1$, this is of the required form.

\end{proof}

At this point we collect some observations that follow from the above proposition. These properties will feature in our characterization of the inverse hulls of Markov subshifts. The first observation is that each idempotent of $H(S_T)$ is comparable to some element in $\mcO$. The second observation is that $(\mcO^{\up} - \mcO) \cup \{0\}$ is closed under multiplication. Finally, one can quickly show that $\mcO^{\up} \cup \{0\}$ is also closed under multiplication. Elements of $\mcO$ are mutually orthogonal, which tells us that products of distinct elements of $\mcO$ are zero. Also, by Remark \ref{rem:cancellation}, products of the form $\theta^{-1}_{x_1}\theta_{x_1}\dots\theta^{-1}_{x_n}\theta_{x_n} ( \theta_a \theta_a^{-1})$ are either $0$ or $\theta_a \theta_a^{-1}$.

Now we will develop some properties for our characterization. The next Proposition says that an element of $\mcO^{\up} - \mcO$ is uniquely determined by the idempotents in $\mcO$ that it sits above.

\begin{prop} Fix $\alpha, \beta$ in $\mcO^{\up} - \mcO$. Suppose that $\mcO \cap \alpha^{\down} = \mcO \cap \beta^{\down} $.
Then $\alpha = \beta$.
\end{prop}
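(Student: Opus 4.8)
The plan is to deduce $\alpha = \beta$ by showing that the hypothesis forces them to be the same partial identity, i.e. to have equal domains. By the description of $\mcO^{\up} - \mcO$ in the preceding proposition, I would write $\alpha = \theta_{x_1}^{-1}\theta_{x_1}\cdots\theta_{x_n}^{-1}\theta_{x_n}$ with each $x_i \in A$, so that $\alpha$ is the identity map on
\[
\dom \alpha = \{ w \in L_T : x_i w \in L_T \text{ for all } i \}.
\]
Since distinct idempotents of $H(S_T)$ are partial identities with distinct domains, it suffices to prove $\dom \alpha = \dom \beta$.

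The key observation, which I expect to be the crux, is that membership of a word $w$ in $\dom \alpha$ depends only on the \emph{first letter} of $w$. Indeed, writing $w = c w'$ with $c \in A$ the first letter, the word $x_i w = x_i c w'$ lies in $L_T$ if and only if $T_{x_i, c} = 1$ and $c w' \in L_T$; since $w \in L_T$ already, this reduces to $T_{x_i, c} = 1$. Hence
\[
\dom \alpha = \bigcup_{c \in C_\alpha} \bigl( \{c\} \cup cL_T \bigr), \qquad \text{where } C_\alpha := \{ c \in A : T_{x_i, c} = 1 \text{ for all } i \},
\]
so that $\dom \alpha$ is completely determined by the set $C_\alpha$ of admissible first letters.

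Next I would identify $C_\alpha$ with the data recorded by $\mcO \cap \alpha^{\down}$. For $c \in A$, the partial identity $\theta_c \theta_c^{-1}$ lies below $\alpha$ exactly when $cL_T \subseteq \dom \alpha$. If $c \in C_\alpha$, the displayed formula gives $cL_T \subseteq \dom \alpha$, so $\theta_c \theta_c^{-1} \leq \alpha$. Conversely, since $T$ has no zero rows the set $cL_T$ is nonempty, so $cL_T \subseteq \dom \alpha$ produces a word of $\dom\alpha$ beginning with $c$, which by the crux forces $T_{x_i, c} = 1$ for all $i$, i.e. $c \in C_\alpha$. Therefore $\mcO \cap \alpha^{\down} = \{ \theta_c \theta_c^{-1} : c \in C_\alpha \}$, and because $a \mapsto \theta_a \theta_a^{-1}$ is injective, the set $\mcO \cap \alpha^{\down}$ determines $C_\alpha$ and conversely.

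Combining these steps finishes the argument: the hypothesis $\mcO \cap \alpha^{\down} = \mcO \cap \beta^{\down}$ yields $C_\alpha = C_\beta$, whence $\dom \alpha = \dom \beta$ by the displayed formula, and therefore $\alpha = \beta$. The only point requiring care is the reduction in the crux step, which rests on the fact that membership in $L_T$ is governed entirely by consecutive transitions (using that $T$ has no zero rows); the remainder is bookkeeping.
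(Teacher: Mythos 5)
Your proof is correct and follows essentially the same route as the paper: both identify $\mcO \cap \alpha^{\down}$ with the set of letters $c$ satisfying $x_i c \in L_T$ for all $i$, and both decompose $\dom \alpha$ as the union of the sets $cL_T^{1} = \{c\} \cup cL_T$ over those admissible first letters, so that equal down-sets force equal domains and hence equal idempotents. Your ``crux'' observation that membership in $\dom \alpha$ depends only on the first letter (using that $T$ has no zero rows) is exactly the content of the paper's displayed formula for $\dom \alpha$, just spelled out more explicitly.
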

\begin{proof} Write $\alpha = \theta^{-1}_{x_1}\theta_{x_1}\dots\theta^{-1}_{x_n}\theta_{x_n}$. Note that $\theta_a \theta_a^{-1} \in \mcO \cap \alpha^{\down}$ if and only if $\theta_{x_i}^{-1}\theta_{x_i}\theta_a \theta_a^{-1} \neq 0$ for $1 \leq i \leq n$, which is equivalent to $x_i a \in L_T$ for $1 \leq i \leq n$. Also
\begin{align*}
 \dom \alpha &= \{w \in L_T : x_i w \in L_T \text{ for } 1 \leq i \leq n \} \\
 		&= \bigcup \left\{ aL_T^{1} : a \in A, x_i a \in L_T \text{ for all } i \right\}.
\end{align*}
Therefore, if $\mcO \cap \alpha^{\down} = \mcO \cap \beta^{\down}$, then $\dom \alpha = \dom \beta$. Since $\alpha$ and $\beta$ are idempotents, we conclude that $\alpha = \beta$.
\end{proof}

Finally, we consider some properties of the $\mcD$-classes and $\mcH$-classes of $H(S_T)$.

\begin{prop}\label{prop:Dclass} Fix $\alpha, \beta$ in $\mcO^{\up} - \mcO$. If $\alpha \mcD \beta$ then $\alpha = \beta$. Also, for each $\theta_a \theta_a^{-1}$ in $\mcO$, there exists a unique $\alpha$ in $\mcO^{\up} - \mcO$ such that $\theta_a \theta_a^{-1} \mcD \alpha$.
\end{prop}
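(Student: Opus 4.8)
The plan is to run everything through the characterization of $\mcD$ recorded in Section~2: for idempotents $e,f$ we have $e \mcD f$ if and only if there is some $\gamma \in H(S_T)$ with $\gamma^*\gamma = e$ and $\gamma\gamma^* = f$. Combining this with the general form $\gamma = \theta_s\theta^{-1}_{x_1}\theta_{x_1}\cdots\theta^{-1}_{x_n}\theta_{x_n}\theta^{-1}_w$ of the elements of $H(S_T)$ established above (with $s,w \in L_T^1$), the whole argument reduces to bookkeeping with the domains and ranges of partial bijections. I would also keep in hand the key structural feature of $\mcO^{\up}-\mcO$: if $\alpha > \theta_a\theta_a^{-1}$ then, as observed in the proof of Proposition~\ref{prop:idempotents}, the length-one word $a$ itself lies in $\dom\alpha$, so \emph{idempotents in $\mcO^{\up}-\mcO$ act as the identity on single letters}.

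For the first assertion I would fix $\gamma$ witnessing $\alpha \mcD \beta$. Since $\gamma^*\gamma = \alpha$ and $\gamma\gamma^* = \beta$ are precisely the partial identities on $\dom\gamma$ and $\ran\gamma$, we get $\dom\gamma = \dom\alpha$ and $\ran\gamma = \dom\beta$, and both of these sets contain a single letter by the structural feature above. Now I read off the outer factors of $\gamma$. Its innermost (rightmost) factor is $\theta_w^{-1}$, so $\dom\gamma \subseteq \dom\theta_w^{-1}$, which is the range of $\theta_w$ and consists entirely of words of the form $wv$ with $v \in L_T$, hence of length at least $\abs{w}+1$. If $w \neq 1$ this length is at least two, contradicting that $\dom\gamma$ contains a single letter; therefore $w = 1$. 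Symmetrically, the outermost (leftmost) factor is $\theta_s$, so $\ran\gamma \subseteq \ran\theta_s$, whose elements have length at least $\abs{s}+1$; since $\ran\gamma = \dom\beta$ contains a single letter we conclude $s = 1$. With $s = w = 1$ the element $\gamma = \theta^{-1}_{x_1}\theta_{x_1}\cdots\theta^{-1}_{x_n}\theta_{x_n}$ is itself idempotent, so $\alpha = \gamma^*\gamma = \gamma = \gamma\gamma^* = \beta$.

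For the second assertion, existence is immediate from the element $\theta_a$: we have $\theta_a\theta_a^{-1} = \theta_a\theta_a^*$ and $\theta_a^{-1}\theta_a = \theta_a^*\theta_a$, so $\theta_a\theta_a^{-1} \mcD \theta_a^{-1}\theta_a$, and $\theta_a^{-1}\theta_a$ lies in $\mcO^{\up}-\mcO$ because the row of $T$ indexed by $a$ is nonzero (so $\theta_a^{-1}\theta_a \neq 0$) and its domain contains single letters (so it is not in $\mcO$). Uniqueness is then a formal consequence of the first assertion: if $\alpha$ and $\beta$ in $\mcO^{\up}-\mcO$ are both $\mcD$-related to $\theta_a\theta_a^{-1}$, then $\alpha \mcD \beta$ by symmetry and transitivity of $\mcD$, whence $\alpha = \beta$.

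I expect the first assertion to be the main obstacle, and within it the decisive step is recognizing that the possibility of $\gamma$ carrying the domain of $\alpha$ bijectively onto a genuinely different domain is blocked purely by word length: any nontrivial prefix stripped by $\theta_w^{-1}$ or prepended by $\theta_s$ forces the domain or range above length one, which is incompatible with the defining feature that idempotents in $\mcO^{\up}-\mcO$ are identities on single letters. Once that is in place the remaining steps are routine, and the second assertion is essentially a corollary.
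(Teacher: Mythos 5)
Your proof is correct and takes essentially the same route as the paper: both write the $\mcD$-witness $\gamma$ in the normal form $\theta_s\theta^{-1}_{x_1}\theta_{x_1}\cdots\theta^{-1}_{x_n}\theta_{x_n}\theta^{-1}_w$, force $s = w = 1$, conclude that $\gamma$ is then itself idempotent so $\alpha = \gamma^*\gamma = \gamma\gamma^* = \beta$, and obtain existence from $\theta_a\theta_a^{-1} \mcD \theta_a^{-1}\theta_a$ (using that no row of $T$ is zero) and uniqueness from the first assertion. The only cosmetic difference is how $s = w = 1$ is justified: the paper reuses the prefix-comparison argument from the proof of Proposition \ref{prop:idempotents} applied to the idempotents $\gamma\gamma^*$ and $\gamma^*\gamma$, whereas you derive it from the equivalent word-length observation that $\dom\gamma$ and the image of $\gamma$ lie in $E_w$ and $E_s$, whose elements have length at least two when $w, s \neq 1$, which is incompatible with the single letters that domains of elements of $\mcO^{\up} - \mcO$ must contain.
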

\begin{proof} Consider arbitrary $\gamma = \theta_s\theta^{-1}_{x_1}\theta_{x_1}\dots\theta^{-1}_{x_n}\theta_{x_n}\theta^{-1}_w$ in $H(S_T)$ and suppose that $\gamma \gamma^*$ and $\gamma^* \gamma$ lie in $\mcO^{\up} - \mcO$. By the same argument as in the proof of Proposition \ref{prop:idempotents}, $\gamma \gamma^* \in \mcO^{\up} - \mcO$ implies $s = 1$. Also, $\gamma^* \gamma \in \mcO^{\up} - \mcO$ implies $w = 1$. Therefore $\gamma \gamma^* = \gamma^* \gamma$. Now suppose that $\alpha, \beta$ in $\mcO^{\up} - \mcO$ with $\alpha \mcD \beta$. Then there exists $\gamma \in H(S_T)$ such that $\gamma \gamma^* = \alpha$ and $\gamma^* \gamma = \beta$. By the above argument, $\alpha = \beta$.

For the second assertion, we have that $\theta_a \theta_a^{-1} \mcD \theta_a^{-1} \theta_a$. Since no row of the Markov transition matrix is zero, we have $0 \neq \theta_a^{-1} \theta_a \in \mcO^{\up} - \mcO$, which completes the proof.
\end{proof}

\begin{prop} The inverse hull $H(S_T)$ is combinatorial.
\end{prop}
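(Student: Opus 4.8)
The plan is to verify the criterion recorded in the preliminaries: an inverse semigroup is combinatorial provided that $\gamma^* \gamma = \gamma \gamma^*$ forces $\gamma$ to be idempotent. So I would fix a nonzero $\gamma \in H(S_T)$ satisfying $\gamma^*\gamma = \gamma\gamma^*$ and aim to show $\gamma$ is idempotent (the element $0$ being idempotent trivially). Using the normal form from the Theorem above, I would write $\gamma = \theta_s \, \tau \, \theta_w^{-1}$ where $\tau = \theta^{-1}_{x_1}\theta_{x_1}\dots\theta^{-1}_{x_n}\theta_{x_n}$ and $s,w \in L_T^1$.

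Next I would describe $\gamma$ explicitly as a partial bijection. Since each $\theta^{-1}_{x_i}\theta_{x_i}$ is the identity on $\{v \in L_T : x_i v \in L_T\}$, the idempotent $\tau$ is the identity on $\dom\tau = \{v \in L_T : x_i v \in L_T \text{ for all } i\}$. Composing on the right with $\theta_w^{-1}$ and on the left with $\theta_s$, and setting $P = F_w \cap \dom\tau \cap F_s$, I would check that $\gamma$ sends $wx \mapsto sx$ for $x \in P$, so that $\dom\gamma = \{wx : x \in P\}$ and $\operatorname{ran}\gamma = \{sx : x \in P\}$ (reading $1 \cdot x = x$ in the degenerate cases $s=1$ or $w=1$). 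Because $\gamma^*\gamma$ and $\gamma\gamma^*$ are the identities on $\dom\gamma$ and $\operatorname{ran}\gamma$ respectively, the hypothesis $\gamma^*\gamma = \gamma\gamma^*$ is equivalent to the equality of sets $\{wx : x \in P\} = \{sx : x \in P\}$, which is nonempty since $\gamma \neq 0$.

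The heart of the argument is a length count. Writing $|\cdot|$ for word length with $|1|=0$, every element $wx$ of $\dom\gamma$ satisfies $|\gamma(wx)| = |sx| = |wx| + (|s|-|w|)$, so $\gamma$ restricts to a bijection of the common set $Q = \dom\gamma = \operatorname{ran}\gamma$ onto itself that shifts length by the constant $d = |s|-|w|$. Choosing a shortest word in the nonempty set $Q$, I would rule out $d \neq 0$: if $d<0$ the image of a shortest element is shorter still, and if $d>0$ a shortest element, being in $\operatorname{ran}\gamma = Q$, has a preimage that is shorter. Hence $|s|=|w|$. Finally, fixing $x \in P$, the word $wx$ lies in $\{sx' : x' \in P\}$, so $wx = sx'$ with $|s|=|w|$ forces $s$ and $w$ to agree on their first $|s|$ letters, that is $s=w$. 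Then $\gamma(wx) = sx = wx$, so $\gamma$ is the identity on $Q$ and therefore idempotent.

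I expect the main obstacle to be the bookkeeping in identifying $\dom\gamma$ and $\operatorname{ran}\gamma$ precisely, and in particular handling the boundary cases $s=1$ or $w=1$ uniformly; once the description $wx \mapsto sx$ on $P$ is in hand, the minimal-length argument and the prefix comparison are both short.
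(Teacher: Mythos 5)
Your proof is correct, and it follows the paper's skeleton up to the final step: like the paper, you invoke the criterion that combinatoriality follows once $\gamma^*\gamma = \gamma\gamma^*$ forces $\gamma$ to be idempotent, you use the normal form $\gamma = \theta_s\theta_{x_1}^{-1}\theta_{x_1}\cdots\theta_{x_n}^{-1}\theta_{x_n}\theta_w^{-1}$, and your identification $\dom\gamma = \{wx : x \in P\}$, $\operatorname{ran}\gamma = \{sx : x \in P\}$ is exactly the paper's computation of $\dom \gamma^*\gamma$ and $\dom \gamma\gamma^*$. Where you genuinely diverge is in deducing $s = w$. The paper argues by mutual prefix divisibility: from a common element $wu = sv$ it gets $w = sy$ or $s = wy$ for $y \in L_T^1$, then extends by a single letter $a$ (using that prefixes of admissible words are admissible) and bounces $sa$ through the equality of domain and range to obtain the reverse prefix relation $s = wz'$, forcing $s = w$. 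You instead run a minimal-length argument: $\gamma$ is a bijection of $Q = \dom\gamma = \operatorname{ran}\gamma$ onto itself shifting word length by the constant $d = |s| - |w|$, and a shortest element of the nonempty set $Q$ rules out both signs of $d \neq 0$, after which $wx = sx'$ with $|s| = |w|$ yields $s = w$ by comparing prefixes of equal length. Your endgame is arguably cleaner --- it avoids the case split between $w = sy$ and $s = wy$ and the letter-extension step entirely, and it handles the degenerate cases $s = 1$ or $w = 1$ uniformly --- at the modest cost of verifying the explicit description of $\gamma$ as the map $wx \mapsto sx$ on $P = F_w \cap \dom\tau \cap F_s$, a bookkeeping step the paper's route also implicitly requires. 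Both arguments are sound and of comparable length.
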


\begin{proof} Let $\gamma = \theta_s\theta^{-1}_{x_1}\theta_{x_1}\dots\theta^{-1}_{x_n}\theta_{x_n}\theta^{-1}_w \neq 0$ and suppose that $\gamma \gamma^* = \gamma^* \gamma$. Now, $\dom \gamma^* \gamma = \{ wu \in L_T : u \in L_T, su \in L_T, \text{ and } x_i u \in L_T \text{ for all $i$} \}$ and $\dom \gamma \gamma^* = \{ su \in L_T : u \in L_T, wu \in L_T, \text{ and } x_i u \in L_T \text{ for all $i$} \}$.

Since $\gamma \gamma^* = \gamma^* \gamma \neq 0$ we have $wu = sv$ for some $u,v \in L_T$. Then $w = sy$ or $s = wy$ for some $y \in L_T^{1}$. First suppose that $w = sy$. As $\gamma^* \gamma \neq 0$, there exists $a \in A$ such that $wa \in \dom \gamma^* \gamma$. So $sa \in L_T$ and $x_i a \in L_T$ for all $i$. Then $sa \in \dom \gamma \gamma^*$ and hence $sa \in \dom \gamma^* \gamma$. Then we have that $sa = wz$ for some $z \in L_T$. By removing the last letter we have $s = wz'$ for some $z' \in L_T^{1}$. Since $w = sy$ for some $y \in L_T^{1}$ and $s = wz'$ for some $z' \in L_T^{1}$, we conclude that $s = w$. A symmetric argument works in the case that $s = wy$. 

Therefore $\gamma$ is idempotent, which completes the proof that $H(S_T)$ is combinatorial.

\end{proof}

\section{The Characterization of Inverse Hulls of Markov Shifts}

In this section we show that an inverse semigroup $H$ is isomorphic to the inverse hull of a Markov shift if and only if it is combinatorial and it contains a set $\mcO$ of mutually orthogonal nonzero idempotents satisfying certain properties. Fix a combinatorial inverse semigroup $H$ with $0$ and a set $\mcO$ of nonzero idempotents such that $\mcO$ satisfies the following properties:

\begin{enumerate}

\item[(O1)] the elements of $\mcO$ are mutually orthogonal,

\item[(O2)] every idempotent in $H$ is comparable to some element of $\mcO$,

\item[(O3)] both $\mcO^{\up} \cup \{0\}$ and $(\mcO^{\up} - \mcO) \cup \{0\}$ are closed under multiplication,

\item[(O4)] elements of $\mcO^{\up} - \mcO$ are uniquely determined by the set of idempotents in $\mcO$ that they lie above in the natural partial order, and

\item[(O5)] for each $e \in \mcO$, the $\mcD$-class of $e$ contains at most one element of $\mcO^{\up} - \mcO$.

\end{enumerate}

Using the above assumptions, we will show that $H$ contains a $0$-left cancellative semigroup $S$ isomorphic to the semigroup associated with a Markov transition matrix $T$. When $S$ generates $H$, we show that $H$ is isomorphic to $H(S_T)$. The first order of business it to define the sets that will serve as our alphabet and language respectively. Let 
\begin{align*}
 A &=  \{ a \in H : a^*a \in \mcO^{\up} - \mcO \text{ and } aa^* \in \mcO \} \text{, and } \\
 L &= \{ a_1 a_2 \dots a_n \neq 0 : n \in \N, a_i \in A \}.
\end{align*}

We refer to $L$ as the \emph{language associated with $\mcO$}, a name that is justified by Theorem \ref{thm:language} below. We show that $L$ behaves under multiplication much like the set of generators of the inverse hull of a Markov shift. In particular, compare Corollary \ref{cor:generalproducts} below with Lemma \ref{lem:hullproducts}.

\begin{prop}\label{prop:letterproducts} Let $H$ be a combinatorial inverse semigroup and suppose $\mcO$ is a set of nonzero idempotents that satisfies conditions (O1) -- (O5). For $a,b \in A$ we have:
\begin{enumerate}

\item $ab \neq 0$ if and only if $bb^* \leq a^*a$, and
\item $a^*b \neq 0$ if and only if $a = b$.

\end{enumerate} 
\end{prop}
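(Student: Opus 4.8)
The plan is to work directly with the definitions of $A$ and the axioms (O1)--(O5), translating the two claims into statements about idempotents in $\mcO$ and $\mcO^{\up}-\mcO$. Recall that for $a \in A$ we have $a^*a \in \mcO^{\up}-\mcO$ and $aa^* \in \mcO$, so each letter $a$ is an element whose right support is a ``large'' idempotent and whose left support is one of the mutually orthogonal generators in $\mcO$. The key tool throughout will be the characterization of the natural partial order ($s \le t$ iff $s = te$ for an idempotent $e$, equivalently $s = es$) together with the fact, guaranteed by combinatoriality, that an element with equal left and right supports is idempotent.

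\textbf{Proving (1).}
For the forward direction of (1), I would argue contrapositively or directly: if $ab \neq 0$, then $(ab)^*(ab) = b^*a^*ab \neq 0$, so $a^*a$ and $bb^*$ are not orthogonal. Since $bb^* \in \mcO$ and $a^*a \in \mcO^{\up}$, axiom (O2) (comparability) together with (O1) (mutual orthogonality of $\mcO$) should force $bb^*$ to sit below $a^*a$: an element of $\mcO$ is either $\le a^*a$ or orthogonal to it, because if $bb^*$ were strictly comparable the other way or incomparable it would be orthogonal to every element of $\mcO$ below $a^*a$, contradicting $a^*a \in \mcO^{\up}$. For the converse, if $bb^* \le a^*a$ then $a^*a \, bb^* = bb^*$, and I compute
\[
(ab)(ab)^* = a\,bb^*\,a^* = a\,(a^*a\,bb^*)\,a^* = (aa^*)\,a\,bb^*\,a^*,
\]
which I expect to be nonzero precisely because $bb^* \le a^*a$ keeps the middle product from collapsing; unwinding this shows $ab \neq 0$. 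The cleanest route is probably to observe that $bb^* \le a^*a$ means $b = (a^*a)b$, and then $ab = a(a^*a)b = ab$ is manifestly a product that does not vanish since $a = a(a^*a)$ and $ab \neq 0$ reduces to $b \neq 0$.

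\textbf{Proving (2) — the main obstacle.}
For (2), the easy direction is that $a = b$ gives $a^*b = a^*a \neq 0$ since $a^*a \in \mcO^{\up}-\mcO$ is nonzero. The substantive direction is showing $a^*b \neq 0 \implies a = b$, and this is where I expect the real work to lie. First, $a^*b \neq 0$ forces $aa^*$ and $bb^*$ to be non-orthogonal idempotents of $\mcO$, so by (O1) they must be equal: $aa^* = bb^*$, i.e. $a \mcR b$. The goal is then to promote this $\mcR$-relation to equality. I would consider the element $\gamma = a^*b$ and compute its supports: $\gamma\gamma^* = a^*b\,b^*a = a^*(bb^*)a = a^*(aa^*)a = a^*a$ (using $aa^*=bb^*$), and symmetrically $\gamma^*\gamma = b^*a\,a^*b = b^*(aa^*)b = b^*(bb^*)b = b^*b$. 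Thus $\gamma$ witnesses $a^*a \mcD b^*b$, with both supports lying in $\mcO^{\up}-\mcO$.

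\textbf{Finishing via (O4) and (O5).}
Here is the crux: $aa^* = bb^*$ is a \emph{single} element $e \in \mcO$, and I have shown $e \mcD a^*a$ and $e \mcD b^*b$ (via the relations $a$ and $b$ respectively, since $aa^*=e$, $a^*a \in \mcO^{\up}-\mcO$, and likewise for $b$). By axiom (O5), the $\mcD$-class of $e$ contains at most one element of $\mcO^{\up}-\mcO$, which forces $a^*a = b^*b$. Now $a$ and $b$ have identical left and right supports, $aa^*=bb^*$ and $a^*a=b^*b$, so $a \mcH b$; combinatoriality of $H$ then yields $a = b$, completing the proof. The delicate point to verify carefully is that $a$ itself (not merely $\gamma$) realizes $aa^* \mcD a^*a$ so that (O5) applies to the correct $\mcD$-class — this is immediate since $a \in A$ gives $a a^* = e \in \mcO$ and $a^* a \in \mcO^{\up}-\mcO$ with $a$ as the connecting element.
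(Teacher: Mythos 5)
Your part (2) is correct and is in fact word-for-word the paper's own argument: from $a^*b \neq 0$ deduce $aa^*bb^* \neq 0$, hence $aa^* = bb^*$ by (O1); set $x = a^*b$ and compute $xx^* = a^*a$, $x^*x = b^*b$, so $aa^* \mcD a^*a \mcD b^*b$ with both $a^*a, b^*b \in \mcO^{\up} - \mcO$; then (O5) gives $a^*a = b^*b$ and combinatoriality gives $a = b$. Likewise your ``cleanest route'' for the converse of (1) is sound: $bb^* \leq a^*a$ gives $b = a^*ab$, so $ab = 0$ would force $b = a^*(ab) = 0$, a contradiction. The genuine gap is in the forward direction of (1). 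You assert the dichotomy that an element of $\mcO$ is either below $a^*a$ or orthogonal to it, and you justify it by saying that otherwise $bb^*$ would be orthogonal to every element of $\mcO$ below $a^*a$, ``contradicting $a^*a \in \mcO^{\up}$.'' That is a non sequitur: $a^*a \in \mcO^{\up}$ only supplies some $e \in \mcO$ with $e \leq a^*a$, and $bb^*e = 0$ does not force $bb^*a^*a = 0$. Indeed the dichotomy is not a consequence of (O1)--(O2) (nor even of (O1), (O2), (O4), (O5) together): in the five-element semilattice $\{0, e, h, f, g\}$ with $0 < e < g$, $0 < h < f$, $h < g$, and $e \wedge f = e \wedge h = 0$, the set $\mcO = \{e, f\}$ satisfies all axioms except (O3), yet $g \in \mcO^{\up}$ has $gf = h \neq 0$ while $f \not\leq g$. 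The failure of (O3) there ($gf = h \notin \mcO^{\up} \cup \{0\}$) pinpoints exactly what your argument is missing: you never invoke (O3) in part (1), and (O3) is the one axiom the paper uses at precisely this step.

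The repair is the paper's argument. From $ab \neq 0$ one gets $a^*abb^* \neq 0$; since $a^*a \in \mcO^{\up} - \mcO$ and $bb^* \in \mcO \subseteq \mcO^{\up}$, closure of $\mcO^{\up} \cup \{0\}$ under multiplication (O3) puts the nonzero product $a^*abb^*$ in $\mcO^{\up}$, so $a^*abb^* \geq e$ for some $e \in \mcO$. But also $a^*abb^* \leq bb^*$, so $e \leq bb^*$ with $e, bb^* \in \mcO$; mutual orthogonality makes distinct nonzero elements of $\mcO$ incomparable, so $e = bb^*$, whence $bb^* \leq a^*abb^* \leq bb^*$ forces $a^*abb^* = bb^*$, i.e.\ $bb^* \leq a^*a$. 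With this substitution your proof becomes essentially identical to the paper's.
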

\begin{proof} 
Note that both $a$ and $b$ are nonzero. Suppose $bb^* \leq a^*a$. Then $a^*abb^* = bb^* \neq 0$. Thus $ab \neq 0$. 

Conversely, suppose $ab \neq 0$. Then $a^*abb^* \neq 0$. Since $\mcO^{\up} \cup \{0\}$ is closed under multiplication by (O3), $a^*abb^* \in \mcO^{\up}$. That is, $a^*abb^* \geq cc^*$ for some $c \in A$. Also $a^*abb^* \leq bb^*$. Since elements of $\mcO$ are incomparable, $a^*abb^* = bb^*$. So $a^*a \geq bb^*$.

Next suppose $a = b$. Then $aa^*b = a$ implies that $a^*b \neq 0$. 

Conversely, if $a^*b \neq 0$ then $aa^*bb^* \neq 0$. By (O1), $aa^* = bb^*$. Then for $x = a^*b$ we have $xx^* = a^*a$ and $x^*x = b^*b$. Thus $aa^* \mcD a^*a \mcD b^*b$. By (O5), $a^*a = b^*b$. Since $H$ is combinatorial, $a = b$.
\end{proof}

As a corollary we have the following property about products involving words in $L$.

\begin{cor}\label{cor:generalproducts} Let $u,v \in L$ and $u = xa$ for some $x$ in $L^{1}$ and $a$ in $A$. Then
\[
	u^*v = \begin{cases}
		w	& \text{if } v = uw \text{ for some } w \in L, \\ 
		w^*	& \text{if } u = vw \text{ for some } w \in L, \\
		a^*a & \text{if } u = v, \text{ and } \\
		0 & \text{otherwise. }
	\end{cases}
\]
\end{cor}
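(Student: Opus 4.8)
The plan is to expand $u^*v$ letter by letter and collapse it from the center outward. Writing $u = a_1\cdots a_m$ and $v = b_1\cdots b_n$ with $a_i,b_j \in A$, we have $u^*v = a_m^*\cdots a_1^* b_1\cdots b_n$. The central product $a_1^* b_1$ is governed by Proposition~\ref{prop:letterproducts}(2): it is $0$ unless $a_1 = b_1$, in which case it equals $a_1^*a_1 \in \mcO^{\up}-\mcO$. If $a_1\neq b_1$ the entire product is $0$; since this also forces $u$ and $v$ to be prefix-incomparable (none of $v=uw$, $u=vw$, $u=v$ can hold), we land exactly in the ``otherwise'' clause and are done.

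The key technical ingredient is a cancellation lemma: for every $\tau\in\mcO^{\up}$ and every $b\in A$, either $\tau b = 0$ or $\tau b = b$. Indeed, if $\tau b\neq 0$ then $\tau bb^*\neq 0$, and by (O3) this idempotent lies in $\mcO^{\up}$ while also satisfying $\tau bb^*\leq bb^*\in\mcO$; hence it dominates some $e\in\mcO$ with $e\leq \tau bb^*\leq bb^*$, and incomparability of distinct elements of $\mcO$ (a consequence of (O1)) gives $e=bb^*$, so $\tau bb^*=bb^*$ and therefore $\tau b=b$. I will apply this with $\tau=a_1^*a_1$ when $a_1=b_1=:c$: because $v\in L$ is nonzero, its consecutive factor $b_1b_2 = cb_2$ is nonzero, so Proposition~\ref{prop:letterproducts}(1) gives $c^*c\,b_2b_2^* = b_2b_2^*\neq 0$ and the lemma yields $c^*c\,b_2 = b_2$. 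Consequently $u^*v = a_m^*\cdots a_2^*\,b_2\cdots b_n = (a_2\cdots a_m)^*(b_2\cdots b_n)$, which strips the shared leading letter.

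This sets up an induction on $\min(m,n)$. The inductive step is precisely the reduction above, applicable when $m,n\geq 2$ and $a_1=b_1$; the relations $v=uw$, $u=vw$, $u=v$ for $u,v$ correspond to the same relations for the stripped words, and the last letter of $u$ is unchanged, so the formula transfers verbatim. The base cases are the one-letter situations. If $u$ and $v$ are both the single letter $a$, then $u^*v=a^*a$. If $u$ is a single letter equal to $b_1$, then $u^*v = c^*c\,b_2\cdots b_n = b_2\cdots b_n$, matching $v=uw$. If instead $v$ is a single letter equal to $a_1$, the trailing idempotent $c^*c$ must be absorbed on the right; I would handle this by passing to adjoints and invoking the already-established left absorption $c^*c\,a_2 = a_2$, obtaining $u^*v = a_m^*\cdots a_2^* = w^*$ with $u=vw$.

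The step I expect to require the most care is the cancellation lemma together with the bookkeeping of these base cases. The lemma rests on correctly marrying the closure axiom (O3) to the incomparability of $\mcO$ and to Proposition~\ref{prop:letterproducts}(1), and each of its applications depends on the elementary but essential observation that a nonzero word in $L$ has every two-letter factor nonzero (otherwise the whole product would vanish). The asymmetry of the $u=vw$ base case, where the idempotent sits on the wrong side to be absorbed directly, is the one place where a small extra maneuver (passing to $*$) is needed; everything else is a routine analog of Lemma~\ref{lem:hullproducts}.
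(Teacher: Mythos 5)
Your proposal is correct and takes essentially the same route as the paper's own proof: expand $u^*v$ letterwise, use Proposition~\ref{prop:letterproducts}(2) to force agreement of the leading letters, absorb the resulting central idempotent via $b_2b_2^* \leq b_1^*b_1$ from Proposition~\ref{prop:letterproducts}(1), and peel off the shared prefix inductively. Your standalone cancellation lemma ($\tau b \in \{0,b\}$ for $\tau \in \mcO^{\up}$, proved from (O1) and (O3)) and the adjoint maneuver in the $u = vw$ base case are simply explicit packagings of the steps the paper compresses into ``the formula for $u^*v$ can be verified quickly using similar calculations.''
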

\begin{proof} Let $u = a_1 a_2 \cdots a_n$ and $v = b_1 b_2 \cdots b_m$ where $a_i, b_j \in L$. If $u^*v \neq 0$ then it follows from Proposition \ref{prop:letterproducts} that $a_1 = b_1$. Moreover since $b_1 b_2 \neq 0$ we have $b_2 b_2^* \leq b_1^* b_1$. Then
\begin{align*}
	u^*v	&= a_n^* \cdots a_2^* a_1^* b_1 b_2 \cdots b_m \\
			&= a_n^* \cdots a_2^* (b_1^* b_1) (b_2 b_2^*) b_2 \cdots b_m \\
			&= a_n^* \cdots a_2^* b_2 \cdots b_m.
\end{align*}
By continuing in this way we conclude that $u = v$, $u = vw$, or $v = uw$ for some $w \in L$. In each case, the formula for $u^*v$ can be verified quickly using similar calculations.
\end{proof}

Just as for $H(S_T)$, we can now show that products in the inverse semigroup generated by $L$ take on a special form. We leave the proof of the following proposition to the reader.

\begin{prop}\label{prop:generalnormalform} Any nonzero element of the inverse semigroup generated by $L$ is of the form $w a_1^* a_1 a_2^* a_2 \cdots a_n^* a_n v^*$ where $n \geq 1$, $a_i \in A$ and $w,v \in L^{1}$.
\end{prop}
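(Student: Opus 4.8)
The plan is to induct on the number of factors in an expression of the element as a product of generators. Recall that the inverse semigroup generated by $L$ consists of all finite products $g_1 g_2 \cdots g_k$ with each $g_i \in L \cup L^*$. I will show that the collection $N$ of elements of the claimed form $w\,a_1^* a_1 \cdots a_n^* a_n\,v^*$ (with $n \geq 1$, $a_i \in A$, and $w,v \in L^1$), together with $0$, contains every generator and is closed under right multiplication by an arbitrary generator; the proposition then follows by induction on $k$. For the containment of generators, note that if $u \in L$ ends in the letter $a$ then $u^*u = a^*a$ by Corollary \ref{cor:generalproducts}, so both $u = u\,(a^*a)\,1^*$ and $u^* = 1\,(a^*a)\,u^*$ lie in $N$.

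For the inductive step, I would take $P = w\,\epsilon\,v^* \in N$ with $\epsilon = a_1^* a_1 \cdots a_n^* a_n$ and multiply on the right by a generator. If the generator is $u^* \in L^*$, then $v^* u^* = (uv)^*$, and since $uv$ is a product of letters it is either $0$ or an element of $L$ by the definition of $L$; accordingly $Pu^* = 0$ or $Pu^* = w\,\epsilon\,(uv)^* \in N$. If the generator is $u \in L$, I would simplify the tail $v^* u$ via Corollary \ref{cor:generalproducts}, giving four cases. When $v^* u = 0$ we have $Pu = 0$. When $v = uw'$ (so $v^* u = w'^*$) we get $Pu = w\,\epsilon\,w'^* \in N$ directly. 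When $v = u$ (so $v^*u = b^*b$, with $b$ the last letter of $v$) the middle becomes $\epsilon\,b^*b$, still a product of single-letter domain idempotents, so $Pu = w\,(\epsilon b^*b)\,1^* \in N$. The remaining case $u = vw'$, where $v^* u = w' \in L$ and $Pu = w\,\epsilon\,w'$, is the one requiring real work.

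The crux is therefore to understand $\epsilon w'$, and the key claim is that $\epsilon w' \in \{0, w'\}$. Writing $w' = c w''$ with first letter $c$, one checks that $w' w'^* \leq cc^*$ and that $\epsilon\,(cc^*) \in \{0, cc^*\}$: by (O3) the product $\epsilon\,(cc^*)$ lies in $\mcO^{\up}$ and is $\leq cc^*$, so if it is nonzero it dominates some $dd^* \in \mcO$ with $dd^* \leq cc^*$, whence $d = c$ by (O1) and $\epsilon\,(cc^*) = cc^*$. Thus either $\epsilon \geq cc^* \geq w'w'^*$, forcing $\epsilon w' = w'$, or $\epsilon\,(cc^*) = 0$, forcing $\epsilon w' = 0$. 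In the former subcase $w\,\epsilon\,w' = w w'$, a single element of $L^1$ (hence $0$ or a word of $L$), which we re-express as $(ww')\,(d^*d)\,1^*$ with $d$ its last letter. This establishes closure of $N \cup \{0\}$ under right multiplication by generators and completes the argument. The main obstacle is exactly this absorption step: it is the only point where the orthogonality and multiplicative-closure hypotheses on $\mcO$ are invoked, the rest being the formal rewriting supplied by Corollary \ref{cor:generalproducts}.
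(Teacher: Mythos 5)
Your proof is correct, and since the paper explicitly leaves this proposition to the reader there is no written proof to diverge from; your induction on the number of generators --- reducing tails with Corollary \ref{cor:generalproducts} and showing the middle idempotent $\epsilon$ either absorbs into or annihilates a word (the abstract analogue of Remark \ref{rem:cancellation}, proved from (O1) and (O3)) --- is exactly the rewriting argument the paper presupposes, mirroring the normal form theorem for $H(S_T)$ in Section 3. Two cosmetic points only: when $v = 1$ you should note $v^*u = u$ directly, since Corollary \ref{cor:generalproducts} requires $v \in L$; and at this stage of the paper an element of $\mcO$ is not yet known to have the form $dd^*$, though your argument needs only \emph{some} $f \in \mcO$ with $f \leq \epsilon(cc^*) \leq cc^*$, which (O1) then forces to equal $cc^*$.
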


Since $n \geq 1$ the product $w a_1^* a_1 a_2^* a_2 \cdots a_n^* a_n v^*$, which is formally defined in $H^{1}$, is always an element of $H$. It follows from Proposition \ref{prop:generalnormalform} that the idempotents of the inverse semigroup generated by $L$ are of the form $w a_1^* a_1 a_2^* a_2 \cdots a_n^* a_n w^*$ where $n \geq 1$, $a_i \in A$ and $w \in L^{1}$.

Another useful consequence of the above proposition is that when $L$ generates $H$ (as an inverse semigroup with $0$) the sets $\mcO$ and $\mcO^{\up} - \mcO$ have a form that echoes their counterparts in the inverse hull of a Markov subshift.

\begin{prop} Suppose that $H$ is generated by $L$. Then $\mcO = \{ aa^* : a \in A \}$ and $\mcO^{\up} - \mcO = \{ a_1^*a_1 a_2^* a_2 \cdots a_n^* a_n : a_i \in A \} - \{0\}$.

\end{prop}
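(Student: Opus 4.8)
The plan is to drive everything from the normal form for idempotents recorded after Proposition \ref{prop:generalnormalform}: since $H$ is generated by $L$, every nonzero idempotent of $H$ can be written as $w\,a_1^*a_1\cdots a_n^*a_n\,w^*$ with $n \ge 1$, $a_i \in A$ and $w \in L^{1}$. The whole argument then reduces to controlling the leading factor $w$. The elementary observation I would establish first is that for any word $w = b_1 w' \in L$ with first letter $b_1 \in A$ and $w' \in L^{1}$ one has $ww^* \le b_1 b_1^*$: indeed $ww^* = b_1 w'w'^* b_1^*$, so $b_1 b_1^*\, ww^* = (b_1 b_1^* b_1)\, w'w'^* b_1^* = ww^*$. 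Combined with (O1), which forces distinct elements of $\mcO$ to be incomparable (if $e \le f$ with $e \neq f$ in $\mcO$ then $ef = e \neq 0$, a contradiction), this is the engine behind both equalities.

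The two easy inclusions require no work. First, $\{aa^* : a \in A\} \subseteq \mcO$ is immediate from the definition of $A$. Second, given a nonzero $\tau = a_1^*a_1\cdots a_n^*a_n$, each factor $a_i^*a_i$ lies in $\mcO^{\up} - \mcO$ by definition of $A$, and since $(\mcO^{\up} - \mcO)\cup\{0\}$ is closed under multiplication by (O3), induction gives $\tau \in (\mcO^{\up}-\mcO)\cup\{0\}$; as $\tau \neq 0$ we conclude $\tau \in \mcO^{\up}-\mcO$. Thus all the content is in the reverse inclusions, and both are proved by ruling out $w \neq 1$ in the normal form.

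For $\mcO \subseteq \{aa^* : a \in A\}$, I would take $e \in \mcO$ and write $e = w\,\tau'\,w^*$ with $\tau' = a_1^*a_1\cdots a_n^*a_n$. If $w = 1$ then $e = \tau' \in \mcO^{\up}-\mcO$ by the easy inclusion, contradicting $e \in \mcO$ since those sets are disjoint; hence $w \neq 1$. Writing $b_1$ for its first letter, the observation above gives $e = w\tau'w^* \le ww^* \le b_1 b_1^*$, and as $e$ and $b_1 b_1^*$ both lie in $\mcO$, incomparability forces $e = b_1 b_1^* = aa^*$ with $a = b_1$. For $\mcO^{\up}-\mcO \subseteq \{a_1^*a_1\cdots a_n^*a_n\}-\{0\}$, I take $\tau \in \mcO^{\up}-\mcO$, write $\tau = w\tau'w^*$, and aim to show $w = 1$. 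If instead $w \neq 1$ with first letter $b_1$, then $\tau \le b_1 b_1^*$ as before; and $\tau \in \mcO^{\up}$ means $e \le \tau$ for some $e \in \mcO$, so $e \le \tau \le b_1 b_1^*$ sandwiches $\tau$ between two elements of $\mcO$. If $b_1 b_1^* = e$ this forces $\tau = e \in \mcO$, contradicting $\tau \notin \mcO$; if $b_1 b_1^* \neq e$ then $e \le b_1 b_1^*$ contradicts orthogonality. Hence $w = 1$ and $\tau = a_1^*a_1\cdots a_n^*a_n$, as required.

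The main obstacle is not a hard computation but keeping the two-level structure of $\mcO^{\up}$ clean: I must be sure the dichotomy ``$w = 1$ versus $w \neq 1$'' is exhaustive and that $\mcO$ and $\mcO^{\up}-\mcO$ are genuinely disjoint, with (O1) collapsing every comparability inside $\mcO$ to equality. For that reason I would state the incomparability consequence of (O1) and the inequality $ww^* \le b_1 b_1^*$ explicitly at the outset and then reuse them verbatim in both reverse inclusions; the remaining identities ($w\tau'w^* \le ww^*$ and the product collapses) are routine inverse-semigroup manipulations that I would verify once.
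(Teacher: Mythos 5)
Your proof is correct and takes essentially the same route as the paper's: both start from the normal form $e = w\,a_1^*a_1\cdots a_n^*a_n\,w^*$, handle $w = 1$ via (O3) (the product lands in $\mcO^{\up}-\mcO$), and handle $w \neq 1$ by extracting the first letter to get $e \leq b_1b_1^*$ and invoking the mutual orthogonality/incomparability from (O1). The only difference is cosmetic: you spell out the final contradiction in the $\mcO^{\up}-\mcO$ inclusion (sandwiching $\tau$ between elements of $\mcO$) that the paper leaves implicit.
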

\begin{proof} By definition of $A$, $aa^* \in \mcO$ for each $a \in A$. Let $e$ be a nonzero idempotent in $\mcO$. We will show that $e = aa^*$ for some $a \in L$. Since $L$ generates $H$, $e = w a_1^* a_1 a_2^* a_2 \cdots a_n^* a_n w^*$ where $n \geq 1$, $a_i \in A$ and $w \in L^{1}$. First suppose that $w = 1$. By definition of $A$, $a_i^* a_i \in \mcO^{\up} - \mcO$ for each $i$. By (O3), $0 \neq e = a_1^* a_1 a_2^* a_2 \cdots a_n^* a_n \in \mcO^{\up} - \mcO$, a contradiction. Thus $w \neq 1$ and we may write $w = a w'$ for some $a \in A$ and $w' \in L^{1}$. Then $e \leq aa^* \in \mcO$. As idempotents in $\mcO$ are mutually orthogonal, we conclude that $e = aa^*$.

We have shown in the previous paragraph that if $a_1^* a_1 a_2^* a_2 \cdots a_n^* a_n \neq 0$ then $a_1^* a_1 a_2^* a_2 \cdots a_n^* a_n \in \mcO^{\up} - \mcO$. Let $e \in \mcO^{\up} - \mcO$. Write
\[
e = w a_1^* a_1 a_2^* a_2 \cdots a_n^* a_n w^*
\]
where $n \geq 1$, $a_i \in A$ and $w \in L^{1}$. If $w \neq 1$ then write $w = a w'$ where $a \in A$ and $w' \in L^{1}$ and note that $e \leq aa^*$, contradicting our assumption that $e \in \mcO^{\up} - \mcO$. Thus $e = a_1^* a_1 a_2^* a_2 \cdots a_n^* a_n$.
\end{proof}

\subsection{$L^0$ is the $0$-left cancellative semigroup of a Markov subshift}

Here we continue to assume that $H$ is a combinatorial inverse semigroup and that $\mcO$ is a set of nonzero idempotents satisfying (O1) -- (O5). We also retain the notation $A$ and $L$. The set $A^*$ denotes finite words over the alphabet $A$. If $a_i \in A$ for $1 \leq i \leq n$, we will temporarily write $a_1 \circ a_2 \circ a_3 \circ \cdots \circ a_n$ for a word in $A^*$ since $a_1 a_2 \cdots a_n$ represents a product in $H$. Let 
\[
M = \{ a_1 \circ a_2 \circ a_3 \circ \cdots \circ a_n \in A^* : a_1 a_2 \cdots a_n \in L \}. 
\]

\begin{prop} The map $a_1 \circ a_2 \circ a_3 \circ \cdots \circ a_n  \mapsto a_1 a_2 \cdots a_n$ from $M$ to $L$ is a bijection.
\end{prop}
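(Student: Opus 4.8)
The plan is to verify surjectivity and then injectivity, with injectivity amounting to unique factorization of words in $L$ into letters of $A$. Surjectivity is immediate: by definition every element of $L$ has the form $a_1 a_2 \cdots a_n \ne 0$ with $a_i \in A$, and this is exactly the image of the formal word $a_1 \circ a_2 \circ \cdots \circ a_n \in M$. The content is therefore to show that if $a_1 a_2 \cdots a_n = b_1 b_2 \cdots b_m$ in $H$ with all $a_i, b_j \in A$, then $n = m$ and $a_i = b_i$ for every $i$. I would argue this by induction on $n$, recovering one letter at a time from the left.

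First I would show how to read off the first letter from $u := a_1 \cdots a_n$ alone. Using the inverse-semigroup identity $(st)(st)^* = s(tt^*)s^* \le ss^*$, one gets $uu^* \le a_1 a_1^* \in \mcO$; since $u \ne 0$, the idempotent $uu^*$ is nonzero, so by mutual orthogonality (O1) it lies below a \emph{unique} element of $\mcO$. Hence $u = b_1 \cdots b_m$ forces $a_1 a_1^* = b_1 b_1^*$. Then, using $b_1 b_1^* = a_1 a_1^*$, one computes $a_1^* b_1 (a_1^* b_1)^* = a_1^*(a_1 a_1^*)a_1 = a_1^* a_1 \ne 0$, so $a_1^* b_1 \ne 0$, and Proposition \ref{prop:letterproducts}(2) gives $a_1 = b_1$.

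Next I would reduce. Since $a_1 a_2 \ne 0$ implies $a_2 a_2^* \le a_1^* a_1$ (Proposition \ref{prop:letterproducts}(1)), one computes $a_1^* u = a_2 \cdots a_n$ when $n \ge 2$, while $a_1^* u = a_1^* a_1 \in \mcO^{\up} - \mcO$ when $n = 1$; the same holds for the $b$'s, and $a_1 = b_1$ gives $a_1^* u = b_1^* u$. The key auxiliary fact is that \emph{no element of $L$ is idempotent}: for $w = a_1 \cdots a_n \in L$, Corollary \ref{cor:generalproducts} gives $w^* w = a_n^* a_n \in \mcO^{\up} - \mcO$, whereas $w w^* \le a_1 a_1^* \in \mcO$ as above, so if $w$ were idempotent then $w^* w = w w^*$ would place an element of $\mcO^{\up} - \mcO$ below an element of $\mcO$, contradicting (O1) and the definition of $\mcO^{\up} - \mcO$. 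With this in hand, $a_1^* u$ is idempotent precisely when $n = 1$, so the lengths $n$ and $m$ must agree; and when both exceed $1$, applying the induction hypothesis to $a_2 \cdots a_n = b_2 \cdots b_m$ completes the proof.

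The main obstacle I anticipate is not any single manipulation but the bookkeeping needed to rule out factorizations of different lengths collapsing to the same element of $H$; this is exactly the role of the ``no idempotent in $L$'' lemma, and establishing it cleanly, together with the first-letter recovery via mutual orthogonality, is the crux. The underlying inverse-semigroup identities and the repeated appeals to (O1) and combinatoriality demand care but are routine once the shape of the induction is fixed.
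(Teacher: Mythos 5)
Your proof is correct, and its overall skeleton matches the paper's: surjectivity is immediate, and injectivity is established by peeling letters from the left via Proposition \ref{prop:letterproducts} and the cancellation scheme of Corollary \ref{cor:generalproducts}, then ruling out factorizations of unequal length through a clash between membership in $\mcO^{\up} - \mcO$ and membership in $\mcO$. You diverge in two places, both legitimately. First, you recover $a_1 = b_1$ from the range side: $uu^* \leq a_1 a_1^*$ and $uu^* \leq b_1 b_1^*$ with $uu^* \neq 0$, so (O1) forces $a_1 a_1^* = b_1 b_1^*$, whence $a_1^* b_1 \neq 0$ and Proposition \ref{prop:letterproducts}(2) applies; the paper instead reads the first letter off the nonvanishing of $w^*w = a_n^* \cdots a_1^* b_1 \cdots b_m$, reusing the computation inside Corollary \ref{cor:generalproducts}. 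Second, and more substantively, your treatment of the length mismatch isolates a standalone lemma --- no element of $L$ is idempotent, since $w^*w = a_n^* a_n \in \mcO^{\up} - \mcO$ while $ww^* \leq a_1 a_1^* \in \mcO$, and (O1) forbids an element of $\mcO^{\up} - \mcO$ from lying below an element of $\mcO$ --- and applies it to $a_1^* u$ within an induction. The paper instead cancels entirely inside the single product $w^*w$ and, when a leftover word survives, multiplies it by an element $aa^* \in \mcO$ beneath it to collapse the leftover into $\mcO$, contradicting $w^*w \in \mcO^{\up} - \mcO$. The two endgames express the same obstruction, but yours packages it as a reusable lemma with transparent bookkeeping (incidentally avoiding the index manipulations in the paper's unequal-length cases, where the printed subscripts such as $a_{n-m+1}$ for $n<m$ appear garbled), while the paper's is more economical in staying within one computation. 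One small polish point: your appeals to ``a unique element of $\mcO$'' and to ``contradicting (O1)'' each deserve the one-line justification that a nonzero idempotent below two elements of $\mcO$ forces them to be equal, and that $e \in \mcO^{\up} - \mcO$ with $e \leq f \in \mcO$ would yield $f' \leq e \leq f$ for some $f' \in \mcO$, hence $f' = f$ and $e = f \in \mcO$, a contradiction; both follow immediately from (O1).
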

\begin{proof}
The map is clearly surjective. Suppose that $w = a_1 a_2 \cdots a_n = b_1 b_2 \cdots b_m$ in $L$ where $a_i, b_j \in A$. Then $0 \neq w^*w = a_n^* \cdots a_2^* a_1^* b_1 b_2 \cdots b_m$. By the same argument used in the proof of Corollary \ref{cor:generalproducts}, we conclude that $a_1 = b_1$ and $w^*w = a_n^* \cdots a_2^* b_2 \cdots b_m$. We may continue in this way until we exhaust all the $a_i$ or $b_j$. Notice by Corollary \ref{cor:generalproducts} we also have that $w^*w = a_n^* a_n$. If $n<m$ we have $a_n^* \cdots a_{n-m+1}^* = w^* w = a_n^* a_n \in \mcO^{\up} - \mcO$. Then there exists $a \in A$ such that $a_n^* \cdots a_{n-m+1}^* (aa^*) = aa^*$. This implies that $a = a_{n-m+1}$ and $a_n^* \cdots a_{n-m+1}^* = a_n^* \cdots a_{n-m+1}^* (aa^*) = aa^* \in \mcO$, which contradicts the fact that $a_n^* \cdots a_{n-m+1}^* \in \mcO^{\up} - \mcO$. If $m < n$, then $b_{m-n+1} \cdots b_n = w^* w \in \mcO^{\up} - \mcO$. Similarly, there exists $bb^* \in \mcO$ such that $(bb^*)b_{m-n+1} \cdots b_n = bb^*$. Again this leads to the contradiction that $b_{m-n+1} \cdots b_n \in \mcO$. Thus $n = m$ and $a_i = b_i$ for $1 \leq i \leq n$, which shows that the map under consideration is injective.
\end{proof}

The proposition shows that we may identify $L$ with a collection $M$ of words in $A^*$. We do so for the rest of this section. Under this identification, we show that $L$ is in fact the language of a Markov subshift. Define an $A \times A$ Markov transition matrix $T$ by 
\[
	T(a,b) = \begin{cases}
				1 & \text{ if } ab \in L \text{ (i.e. $bb^* \leq a^*a$)}\\
				0 & \text{ otherwise. }
			\end{cases}
\]

\begin{thm}\label{thm:language} The semigroup $L \cup \{0\}$ is isomorphic to the semigroup $S_T = L_T \cup \{0\}$ associated with the Markov transition matrix $T$.
\end{thm}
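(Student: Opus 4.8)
The plan is to show that, under the identification of $L$ with the set of words $M \subseteq A^*$ furnished by the previous proposition, we have $M = L_T$; then the map $a_1 a_2 \cdots a_n \mapsto a_1 \circ a_2 \circ \cdots \circ a_n$ (together with $0 \mapsto 0$) is the desired isomorphism $L \cup \{0\} \to S_T$. Before comparing $M$ and $L_T$, I would record that $T$ has no zero rows: for $a \in A$ the idempotent $a^*a$ lies in $\mcO^{\up} - \mcO$, hence strictly above some $bb^* \in \mcO$, so by Proposition \ref{prop:letterproducts}(1) we have $ab \neq 0$ and thus $T(a,b) = 1$. Since no row of $T$ vanishes, every \emph{allowed} word (one with $T(a_i,a_{i+1}) = 1$ for all consecutive pairs) extends forward to an infinite allowed sequence and therefore occurs as a subword, so $L_T$ is exactly the set of allowed words over $A$.

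First I would prove $M \subseteq L_T$. If $a_1 a_2 \cdots a_n \neq 0$ in $H$, then no consecutive product $a_i a_{i+1}$ can vanish, since otherwise $a_1 \cdots a_n = (a_1 \cdots a_{i-1})(a_i a_{i+1})(a_{i+2} \cdots a_n) = 0$; hence $T(a_i,a_{i+1}) = 1$ for every $i$ and the word is allowed. For the reverse inclusion $L_T \subseteq M$ I would argue by induction on $n$, and this is the step I expect to be the main obstacle. Assuming $u = a_1 \cdots a_{n-1} \neq 0$, I want $u a_n \neq 0$ whenever $a_{n-1} a_n \neq 0$. Since $u = u u^* u$ gives $u a_n = 0$ if and only if $u^* u a_n = 0$, and since Corollary \ref{cor:generalproducts} yields $u^* u = a_{n-1}^* a_{n-1}$, it suffices to show $a_{n-1}^* a_{n-1} a_n \neq 0$. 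But $a_{n-1} a_n \neq 0$ means $a_n a_n^* \leq a_{n-1}^* a_{n-1}$ by Proposition \ref{prop:letterproducts}(1), whence $a_{n-1}^* a_{n-1} a_n = a_{n-1}^* a_{n-1} (a_n a_n^*) a_n = (a_n a_n^*) a_n = a_n \neq 0$. The essential point is that the range idempotent $u^* u$ of a word collapses to that of its last letter, converting the global nonvanishing condition into a single consecutive-pair condition.

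Having established $M = L_T$, the previous proposition shows the word map is a bijection of $L \cup \{0\}$ onto $L_T \cup \{0\} = S_T$. Finally I would check that it is multiplicative. For $u = a_1 \cdots a_n$ and $v = b_1 \cdots b_m$ in $L$, the product $uv$ in $H$ equals the concatenation $a_1 \cdots a_n b_1 \cdots b_m$, which is an element of $L$ (represented by the concatenated word) when nonzero and is $0$ otherwise. Correspondingly the concatenation of the two words lies in $M = L_T$ precisely when $uv \neq 0$, so it equals the $S_T$-product $u * v$ in that case and equals $0$ otherwise, matching the definition of $*$ on $S_T$. Products with $0$ are handled trivially, so the map is an isomorphism of semigroups with zero.
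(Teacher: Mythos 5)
Your proof is correct and follows the paper's skeleton---reduce everything to the equality of $L$ and $L_T$ as sets of words over $A$, obtain $L \subseteq L_T$ from the fact that $a^*a \in \mcO^{\up} - \mcO$ sits strictly above some $bb^* \in \mcO$ (so every word extends to an infinite allowed one), and rest the reverse inclusion on the collapse of $w^*w$ to the last letter's idempotent---but you handle the reverse inclusion by a genuinely different device. The paper must apply the $u=v$ computation of Corollary~\ref{cor:generalproducts} to a word $w$ that might a priori be $0$, so it reopens the corollary's \emph{proof} and observes that the telescoping uses only the nonvanishing of consecutive letter products; then $w^*w = a_n^* a_n \neq 0$ forces $w \neq 0$. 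You instead induct on length, invoking the corollary only as stated and only on the nonzero prefix $u = a_1 \cdots a_{n-1}$, getting $u^*u = a_{n-1}^* a_{n-1}$, and then using $a_n a_n^* \leq a_{n-1}^* a_{n-1}$ (Proposition~\ref{prop:letterproducts}) to conclude $u^*u a_n = a_n \neq 0$ and hence $u a_n \neq 0$. Your route buys a black-box use of the corollary, at the mild cost of an explicit induction; the paper's is shorter but requires trusting a remark about the corollary's proof rather than its statement. You also make explicit two points the paper treats as immediate: that $L_T$ is precisely the set of $T$-allowed words because $T$ has no zero rows, and the final verification that the word bijection intertwines the product in $H$ with the operation $*$ on $S_T$. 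One caution, which applies equally to the paper's own proof: the step producing a letter $b \in A$ with $bb^*$ below $a^*a$ tacitly assumes that the elements of $\mcO$ under $a^*a$ have the form $bb^*$ for letters $b$, which is established earlier only under the hypothesis that $L$ generates $H$; this is harmless in the intended application (Theorem~\ref{thm:main} assumes generation), but it is a shared implicit assumption rather than a defect of your argument relative to the paper's.
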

\begin{proof} We just need to verify that the set of words in $L$ is equal to $L_T$. Note that for any letter $a \in A$, $a^*a \in \mcO^{\up} - \mcO$ and thus there is a letter $b \in A$ such that $bb^* \leq a^*a$. So $ab \in L$. Then any $w = a_1 a_2 \cdots a_n \in L$ can be extended to an infinite word $a_1 a_2 \cdots a_n a_{n+1} a_{n+2} \cdots$ such that $T(a_i, a_{i+1}) = 1$ for each $i \in \N$. Thus $w \in L_T$. Conversely, let $w$ be a subword of an infinite word in the subshift associated with $T$. Then we may write $w = a_1 a_2 \cdots a_n$ where $T(a_i, a_{i+1}) = 1$ for each $1 \leq i \leq n-1$. It follows that $a_i a_{i+1}$ in $L$ for each $1 \leq i \leq n-1$. Note that the proof of the $u=v$ case in Corollary \ref{cor:generalproducts} depends only on the assumption that products of consecutive letters of $u$ lie in $L$. Thus if $w = 0$, then $a_n^* a_n = w^* w = 0$. By contradiction, $w \neq 0$ and so $w \in L$.

\end{proof}

It follows that $L\cup \{0\}$ is a $0$-cancellative subsemigroup of $H$.

\subsection{The isomorphism}
Here we show that if $L$ generates $H$ as an inverse semigroup with zero, then $H$ is isomorphic to $H(L^{0})$. As a consequence, an inverse semigroup $H$ is isomorphic to the inverse hull of a Markov subshift if and only if $H$ is combinatorial, $H$ contains a set of nonzero idempotents $\mcO$ satisfying (O1) -- (O5), and $L$ generates $H$.

We continue to fix a combinatorial inverse semigroup $H$ that contains a set of nonzero idempotents $\mcO$ satisfying (O1) -- (O5). Given $\alpha \in H$, define
\[
	D_{\alpha} = \{ x \in L : xx^* < \alpha^* \alpha \}.
\]
We say that $H$ is \emph{right reductive relative to $L$} if for all $\alpha, \beta \in H$,
\begin{enumerate}
\item $D_{\alpha} = D_{\beta}$ and
\item $\alpha x  = \beta x$ for all $x \in D_{\alpha}$
\end{enumerate}
implies $\alpha = \beta$.

\begin{lem}\label{lem:domainsets} Let $0 \neq \alpha = w a_1^* a_1 a_2^* a_2 \cdots a_n^* a_n v^*$ where $n \geq 1$, $a_i \in A$, $w,v\in L^{1}$. Then:
\begin{enumerate}

\item For $v \neq 1$: $x \in D_{\alpha}$ if and only if $x = vy$ for some $y \in L$ and $\alpha x = wy \in L$.

\item For $v = 1$: $x \in D_{\alpha}$ if and only if $wx \in L$ and $a_i x \in L$ for each $i$.

\end{enumerate}

\end{lem}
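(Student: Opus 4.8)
The plan is to reduce the defining condition $xx^* < \alpha^*\alpha$ to a product computation governed by Corollary \ref{cor:generalproducts}. The starting point is the semilattice identity that for $x \in L$, since $xx^*$ and $\alpha^*\alpha$ are idempotents and $xx^*x = x$, we have $xx^* \le \alpha^*\alpha$ if and only if $\alpha^*\alpha x = x$. So I would first put $\alpha^*\alpha$ in a transparent form. Writing $e = a_1^*a_1 \cdots a_n^* a_n$ and using that idempotents commute, together with Corollary \ref{cor:generalproducts} (which gives $w^*w = \ell^*\ell$ for $\ell$ the last letter of $w$) and the closure property (O3), I would simplify $\alpha^*\alpha = v e w^* w e v^*$ to the form $v e' v^*$, where $e' = a_1^*a_1 \cdots a_n^* a_n\, \ell^*\ell \in \mcO^{\up} - \mcO$ is nonzero; when $v = 1$ this is simply $\alpha^*\alpha = e'$. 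I would also record the inequality $\alpha^*\alpha \le vv^*$, which is immediate from $\alpha = \alpha v v^*$.

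With $\alpha^*\alpha = ve'v^*$ in hand, I would split on the four cases of Corollary \ref{cor:generalproducts} for the product $v^*x$. If $v^*x = 0$, or $v = xz$ with $z \in L$, or $x = v$, then the inequality $\alpha^*\alpha \le vv^*$ prevents $xx^* < \alpha^*\alpha$: in the prefix and equality cases one gets $xx^* \ge vv^* \ge \alpha^*\alpha$, and in the disjoint case $xx^* \le \alpha^*\alpha \le vv^*$ would force $vv^*xx^* = xx^* \neq 0$, contradicting $v^*x = 0$. This is precisely why the statement restricts to $x = vy$ with $y \in L$ nonempty. In that remaining case Corollary \ref{cor:generalproducts} gives $v^*x = y$, so $\alpha^*\alpha x = v e' y$ and $\alpha x = w e y$. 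Because $e$ and $e'$ are idempotents in $\mcO^{\up} - \mcO$, each of $ey$ and $e'y$ equals either $y$ or $0$ according to whether the relevant letter-to-letter transitions hold; unwinding $e'y = y$ into these junction conditions shows $\alpha^*\alpha x = x$ exactly when $wy \in L$ and $a_i y \in L$ for all $i$, and these are precisely the conditions making $\alpha x = wey$ equal to the nonzero word $wy$. This yields the stated equivalences for the non-strict inequality $xx^* \le \alpha^*\alpha$ in both the $v \neq 1$ and $v = 1$ cases.

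The main obstacle is upgrading $\le$ to the strict inequality required by $D_\alpha$, that is, ruling out $xx^* = \alpha^*\alpha$. For $v = 1$ this is clean: if $xx^* = \alpha^*\alpha = e' \in \mcO^{\up} - \mcO$, then $e'$ lies below the element $cc^* \in \mcO$ associated to the first letter $c$ of $x$ and also above some element of $\mcO$, and mutual orthogonality (O1) forces that lower element to equal $cc^*$, giving $e' = cc^* \in \mcO$ and contradicting $e' \in \mcO^{\up} - \mcO$. For $v \neq 1$ I would transport this through $v$: conjugating the putative equality $vyy^*v^* = ve'v^*$ by $v$, using $v^*v = \ell_v^*\ell_v$ and $vb \in L$ for $b$ the first letter of $y$, collapses it to $yy^* = e'\,\ell_v^*\ell_v$. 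The left-hand side lies in $\mcO^{\down}$ (it is below $bb^* \in \mcO$) while the right-hand side lies in $\mcO^{\up} - \mcO$, and the same orthogonality bookkeeping shows the two cannot coincide. Once strictness is secured, both cases of the lemma follow by combining it with the equivalence for $xx^* \le \alpha^*\alpha$ established above.
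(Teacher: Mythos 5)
Your proposal is correct and follows essentially the same route as the paper's proof: you use Corollary \ref{cor:generalproducts} to force $x = vy$ (via $xx^* < \alpha^*\alpha \leq vv^*$), peel off prefixes to compute $\alpha x$ and establish $xx^* \leq \alpha^*\alpha$, and then exclude equality exactly as the paper does, by conjugating with $v$ to land $yy^*$ in $\mcO^{\up} - \mcO$ and deriving a contradiction with $yy^* \leq bb^* \in \mcO$. Your repackaging via $xx^* \leq \alpha^*\alpha \iff \alpha^*\alpha x = x$ and the normal form $\alpha^*\alpha = ve'v^*$ is only a cosmetic reorganization, though you do usefully spell out the orthogonality bookkeeping the paper leaves implicit and treat the $v = 1$ case explicitly where the paper calls it ``similar.''
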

\begin{proof}
We give the proof for $v \neq 1$. The case where $v=1$ is similar. Let $x \in D_{\alpha}$. Then $xx^* < \alpha^* \alpha \leq vv^*$. Hence $v^*x \neq 0$. We can then employ Corollary \ref{cor:generalproducts} to conclude that $x = vy$ for some $y \in L$. Moreover, as $vy \neq 0$, we have that $v^*vy = y$ by Corollary \ref{cor:generalproducts}. By similar reasoning we have 
\[
\alpha x = w a_1^* a_1 \cdots a_n^* a_n (v^* v y) = w a_1^* a_1 \cdots (a_n^* a_n y) = \dots = wy.
\]
Since $x \in D_{\alpha}$ we have $\alpha x \neq 0$ and hence $\alpha x = wy \in L$.

Conversely suppose $x = vy \in L$ for some $y \in L$ and $\alpha x = wy \in L$. Then $\alpha x \neq 0$ and hence
\begin{align*}
	0 \neq \alpha^* \alpha xx^*	&= v a_1^* a_1 \cdots a_n^* a_n w^*w v^* v x^* \\
								&= v a_1^* a_1 \cdots a_n^* a_n (w^*w y) x^* \\
								&= v a_1^* a_1 \cdots (a_n^* a_n y) x^* \\
								&= vyx^* \\
								&= xx^*.
\end{align*}

So $xx^* \leq \alpha^* \alpha$. Suppose $xx^* = \alpha^* \alpha$. As $vy \in L$ we have $yy^* \leq v^*v$ so
\[
	0 \neq yy^* = v^*v yy^* v^*v = v^* xx^* v = v^*v a_1^* a_1 \cdots a_n^* a_n w^* w.
\]
Thus $yy^* \in \mcO^{\up} - \mcO,$ a contradiction. Therefore $xx^* < \alpha^* \alpha$.

\end{proof}
\begin{prop} If $L$ generates $H$, then $H$ is right reductive relative to $L$. 
\end{prop}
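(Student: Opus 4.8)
The plan is to force $\alpha$ and $\beta$ to share both their left idempotent $\alpha^*\alpha = \beta^*\beta$ and their right idempotent $\alpha\alpha^* = \beta\beta^*$, and then to conclude via the combinatorial hypothesis. First I would dispose of the zero case. Since $xx^* < 0$ is impossible, $D_\gamma = \emptyset$ whenever $\gamma = 0$; conversely, Lemma \ref{lem:domainsets} together with the no-dead-ends assumption on $L$ (every letter has a successor, so $\gamma^*\gamma \neq 0$ supplies a common continuation of the active letters) shows $D_\gamma \neq \emptyset$ for every nonzero $\gamma$. Thus $D_\gamma = \emptyset \iff \gamma = 0$, so $D_\alpha = D_\beta$ forces $\alpha$ and $\beta$ to be simultaneously zero or nonzero, and the zero case is trivial. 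Hence I may assume both are nonzero and fix their normal forms from Proposition \ref{prop:generalnormalform}.

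The crucial step, call it Step A, is the purely idempotent claim: \emph{if $e,f$ are nonzero idempotents with $\{x \in L : xx^* < e\} = \{x \in L : xx^* < f\}$, then $e = f$.} Granting it, since $D_\gamma = \{x \in L : xx^* < \gamma^*\gamma\}$ depends only on $\gamma^*\gamma$, applying Step A to $e = \alpha^*\alpha$ and $f = \beta^*\beta$ gives $\alpha^*\alpha = \beta^*\beta$. For the right idempotents I would first establish the range identity $\{\alpha x : x \in D_\alpha\} = D_{\alpha^*}$: if $x \in D_\alpha$ then $xx^* < \alpha^*\alpha$ gives $\alpha^*\alpha\, x = \alpha^*\alpha\, xx^* x = xx^*x = x$ and $\alpha x \in L$ by Lemma \ref{lem:domainsets}, so $\alpha^*(\alpha x) = x \neq 0$; and since the corner map $g \mapsto \alpha g \alpha^*$ carries idempotents $< \alpha^*\alpha$ injectively and order-preservingly to idempotents $< \alpha\alpha^*$, one has $(\alpha x)(\alpha x)^* = \alpha(xx^*)\alpha^* < \alpha\alpha^*$, i.e. $\alpha x \in D_{\alpha^*}$; the reverse inclusion is symmetric. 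Because $D_\alpha = D_\beta$ and $\alpha x = \beta x$ on this common set, the two value sets coincide, so $D_{\alpha^*} = D_{\beta^*}$, and Step A applied to $\alpha^*,\beta^*$ yields $\alpha\alpha^* = \beta\beta^*$.

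With $\alpha^*\alpha = \beta^*\beta$ and $\alpha\alpha^* = \beta\beta^*$ in hand, I would set $t = \alpha\beta^*$ and compute $t^*t = \beta(\alpha^*\alpha)\beta^* = \beta\beta^*$ and $tt^* = \alpha(\beta^*\beta)\alpha^* = \alpha\alpha^*$, so that $t^*t = tt^*$. Because $H$ is combinatorial this forces $t$ to be idempotent, whence $t = t^*t = \beta\beta^*$. Then $\alpha = \alpha(\alpha^*\alpha) = \alpha(\beta^*\beta) = (\alpha\beta^*)\beta = (\beta\beta^*)\beta = \beta$, completing the argument.

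The main obstacle is Step A, the delicate point being that $D_e$ records only the cylinders lying \emph{strictly} below $e$ and so seems to forget the minimal generators of $e$. I would prove it by peeling off leading letters. When $e \in \mcO^{\up} - \mcO$ it is a product $a_1^*a_1 \cdots a_n^* a_n$, and its single-letter members, $\{a \in A : aa^* < e\} = \{a \in A : a \in D_e\}$, are precisely the elements of $\mcO$ lying below $e$; so (O4) determines $e$ from $D_e$. For a general nonzero idempotent $e$, axiom (O2) places $e \leq aa^*$ for some $a \in \mcO$ (its leading letter), and conjugation $e \mapsto a^* e a$ is the standard isomorphism of the corner below $aa^*$ onto the corner below $a^*a$; using Corollary \ref{cor:generalproducts} I would verify the transformation rule $D_{a^* e a} = \{y : ay \in D_e\}$, so that $D_e$ determines $D_{a^* e a}$, and induction on the length of the leading word reduces to the base case, where (O4) finishes. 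The delicate bookkeeping is to check that $yy^* < a^* e a \iff (ay)(ay)^* < e$ respects strict inequality (and that $ay \in L$ throughout) and to confirm nonemptiness of $D_\gamma$ for nonzero $\gamma$; both rest on the finiteness of the alphabet and on the Markov overlap property of $L$ secured in Theorem \ref{thm:language}.
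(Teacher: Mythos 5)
Your proposal is correct in substance but follows a genuinely different route from the paper. The paper's proof never leaves the normal forms: writing $\alpha = w a_1^*a_1\cdots a_n^*a_n v^*$ and $\beta = t b_1^*b_1\cdots b_m^*b_m z^*$, it picks $va \in D_\alpha = D_\beta$ and uses prefix cancellation in $L$ to force $v = z$, uses the pointwise agreement at the single element $va$ to cancel a last letter and conclude $w = t$, and then applies (O4) exactly once, to the conjugated middle idempotents $v^*\alpha^*\alpha v,\, z^*\beta^*\beta z \in \mcO^{\up} - \mcO$, whose $\mcO$-down-sets are compared via the membership test $vb \in D_\alpha = D_\beta$; notably, combinatoriality of $H$ is never invoked there. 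You instead prove the stronger, reusable statement (your Step A) that a nonzero idempotent of $H$ is determined by its strict $D$-set, by induction on the length of the leading word using the corner conjugations $e \mapsto a^*ea$; this yields $\alpha^*\alpha = \beta^*\beta$ from $D_\alpha = D_\beta$ alone, cleanly isolates where the hypothesis $\alpha x = \beta x$ enters (only to transport the domain identity to the range identity $\alpha D_\alpha = D_{\alpha^*} = D_{\beta^*} = \beta D_\beta$), and finishes by $\mcH$-triviality — essentially an abstract version of the fact that idempotents of $H(L^0)$ are determined by their domains. What your route buys is conceptual clarity and a lemma of independent interest; what it costs is the inductive bookkeeping plus genuine use of combinatoriality, which the paper's more economical argument shows is dispensable for this proposition. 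One point you should make explicit to complete Step A: that $D_e = D_f$ forces $e$ and $f$ into the same stratum with the same leading letter. This follows because $D_e$ contains a letter if and only if $e \in \mcO^{\up} - \mcO$ (if $e \leq aa^*$ and $b \in D_e$, then $0 \neq bb^* = bb^*aa^*$ gives $b = a$ by (O1), whence $aa^* < e \leq aa^*$, absurd; conversely $e \in \mcO^{\up}-\mcO$ lies strictly above some $cc^*$, so $c \in D_e$), and otherwise every element of $D_e$ begins with the unique $a$ satisfying $e \leq aa^*$; combined with nonemptiness of $D_e$ for $e \neq 0$, this is what allows your induction on $e$ and $f$ to run in parallel. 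With that verification spelled out, your argument goes through.
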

\begin{proof}
First we show that $\alpha \neq 0$ implies $D_{\alpha} \neq \emptyset$. If $\alpha \neq 0$ then $\alpha = w a_1^* a_1 a_2^* a_2 \cdots a_n^* a_n v^*$ for some $n \geq 1$, $a_i \in A$ and $w,v \in L^{1}$. As $0 \neq \alpha^* \alpha = v a_1^*a_1 \cdots a_n^*a_n w^*w v^*$ we have $v^*v a_1^*a_1 \cdots a_n^*a_n w^*w \neq 0$. Thus 
\[
v^*v a_1^*a_1 \cdots a_n^*a_n w^*w \in \mcO^{\up} - \mcO
\]
and there exists $a \in A$ such that $aa^* < v^*v a_1^*a_1 \cdots a_n^*a_n w^*w = v^* \alpha^* \alpha v$. Then $\alpha v a \neq 0$, since $(\alpha v a)^* \alpha v a = a^*a$. For $v = 1$ we have $\alpha a \neq 0$ implies that $wa \neq 0$ and $a_ia \neq 0$ for all $i$. Thus by (2) of Lemma \ref{lem:domainsets}, $a \in D_{\alpha}$. For $v \neq 1$ we have
\[
	\alpha va = w a_1^*a_1 \cdots a_n^* a_n (v^* v a) = w a_1^*a_1 \cdots (a_n^* a_n a) = \dots = wa.
\]
Thus $va \in D_{\alpha}$ by Lemma \ref{lem:domainsets}.

Now for $\alpha, \beta$ in $H$ suppose that $D_{\alpha} = D_{\beta}$ and $\alpha x = \beta x$ for all $x \in D_{\alpha}$. By the argument above, if $D_{\alpha} = D_{\beta} = \emptyset$, then $\alpha = \beta = 0$. So we may assume $D_{\alpha} \neq \emptyset$. Write $\alpha = w a_1^* a_1 a_2^* a_2 \cdots a_n^* a_n v^*, \beta = t b_1^* b_1 b_2^* b_2 \cdots b_m^* b_m z^*$ where $m,n \geq 1$, $a_i, b_j \in A$, and $w,v,t,z \in L^{1}$. Thus $vy \in D_{\alpha}$ for some $y \in L$. In fact for any prefix $y_1$ of $y$ we have $vy_1 \in D_{\alpha}$. 
In particular, $va \in D_{\alpha}$ for some $a \in A$. Then there exists $x \in L$ such that $va = zx$. By right cancellativity we conclude that $v = zx'$ for some $x' \in L^{1}$. Similarly $z = vy'$ for some $y' \in L^{1}$. Then $v = vy'x'$ implies that $y' = x' = 1$ and we conclude that $v = z$. It follows that
\[
wa = \alpha va = \alpha za = \beta z a = ta
\] 
and hence $w = t$.

To finish the proof we will show that 
\[
	v^* \alpha^* \alpha v = w^*w a_1^*a_1 \cdots a_n^*a_n v^*v = t^*t b_1^*b_1 \cdots b_n^*b_n z^*z = z^* \beta^* \beta z.
\]
By (O4), it suffices to prove that $(v^* \alpha^* \alpha v)^{\down} \cap \mcO = (z^* \beta^* \beta z)^{\down} \cap \mcO$. To that end, suppose $bb^* \in (v^* \alpha^* \alpha v)^{\down} \cap \mcO$. Then $0 \neq vbb^*v^* \leq \alpha^* \alpha$. If $vbb^*v^* = \alpha^* \alpha$ then $bb^* = v^* \alpha^* \alpha v \in \mcO^{\up} - \mcO$, a contradiction. Thus $zb = vb \in D_{\alpha} = D_{\beta}$. It follows that $bb^* \in (z^* \beta^* \beta z)^{\down} \cap \mcO$. Thus $(v^* \alpha^* \alpha v)^{\down} \cap \mcO \subseteq (z^* \beta^* \beta z)^{\down} \cap \mcO$. The other inclusion follows by a symmetric argument.

By (O4) we have that $v^* \alpha^* \alpha v = z^* \beta^* \beta z$ and thus 
\[
 \alpha = w v^* \alpha^* \alpha v v^* = t z^* \alpha^* \alpha z z^* = \beta.
\]
\end{proof}

We now show that the set $D_\alpha$ is the domain of the map in $H(L^{0})$ naturally associated with $\alpha$.

\begin{prop}\label{prop:domain} Let $\alpha = w a_1^* a_1 \cdots a_n^* a_n v^*$ in $H$ and consider 
\[
\varphi = \theta_w \theta^{-1}_{a_1}\theta_{a_1}\cdots\theta^{-1}_{a_n}\theta_{a_n}\theta^{-1}_v
\]
in $H(L^0)$. Then $D_{\alpha} = \dom \varphi$.
\end{prop}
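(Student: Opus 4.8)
The plan is to prove $D_\alpha = \dom\varphi$ by computing the right-hand side explicitly as a subset of $L$ and matching it against the description of $D_\alpha$ already supplied by Lemma \ref{lem:domainsets}, handling the cases $v = 1$ and $v \neq 1$ separately just as in that lemma. For $\dom\varphi$ I would read off the action of $\varphi$ by peeling its factors from right to left. The key local fact is that each $\theta_{a_i}^{-1}\theta_{a_i}$ is the identity map on $\{z \in L : a_i z \in L\}$ (by Lemma \ref{lem:hullproducts}), so the middle block $\theta_{a_1}^{-1}\theta_{a_1}\cdots\theta_{a_n}^{-1}\theta_{a_n}$ is the identity on $\{z \in L : a_i z \in L \text{ for all } i\}$; moreover, for $v \neq 1$ the map $\theta_v^{-1}$ has domain the words of $L$ properly extending $v$ and sends $vy \mapsto y$, while $\theta_w$ has domain $\{u \in L : wu \in L\}$. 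Composing these, $x \in \dom\varphi$ iff $x = vy$ with $y \in L$, $a_i y \in L$ for all $i$, and $wy \in L$; when $v = 1$ the factor $\theta_v^{-1}$ degenerates to the identity and the same bookkeeping yields $x \in \dom\varphi$ iff $a_i x \in L$ for all $i$ and $wx \in L$.

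Comparing with Lemma \ref{lem:domainsets}, the case $v = 1$ is immediate: the lemma states $x \in D_\alpha$ iff $wx \in L$ and $a_i x \in L$ for each $i$, which is verbatim the description of $\dom\varphi$ found above. For $v \neq 1$ the lemma gives $x \in D_\alpha$ iff $x = vy$ for some $y \in L$ with $\alpha x = wy \in L$, so the remaining task is to see that the single condition ``$\alpha x = wy \in L$'' unpacks to the conjunction ``$a_i y \in L$ for all $i$ and $wy \in L$'' that describes $\dom\varphi$.

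The main (and only delicate) step is this unpacking, and it is essentially the telescoping already performed inside the proof of Lemma \ref{lem:domainsets}. One writes $\alpha x = w\, a_1^* a_1 \cdots a_n^* a_n\, v^* v y$; since $vy \in L$ we have $v^* v y = y$, and then, applying the $H$-analog of Remark \ref{rem:cancellation} (equivalently, Corollary \ref{cor:generalproducts}) repeatedly, each idempotent $a_i^* a_i$ fixes $y$ when $a_i y \in L$ and annihilates it otherwise. Hence the product collapses to the nonzero element $wy$ exactly when every $a_i y \in L$ and $wy \in L$, and equals $0$ in every other case, which is precisely the equivalence needed to match the two descriptions. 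The strict inequality in the definition of $D_\alpha$ requires no extra work: the boundary value $x = v$ (i.e.\ $y$ empty) is excluded on the $D_\alpha$ side because then $xx^* = vv^* \geq \alpha^*\alpha$, and excluded on the $\dom\varphi$ side because $v$ does not lie in the range of $\theta_v$, so the two exclusions agree. Matching the descriptions in both cases gives $D_\alpha = \dom\varphi$.
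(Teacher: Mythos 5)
Your proposal is correct and follows essentially the same route as the paper: compute $\dom\varphi$ by peeling the partial bijections $\theta_v^{-1}$, $\theta_{a_i}^{-1}\theta_{a_i}$, $\theta_w$ from right to left, and then match the resulting description against Lemma \ref{lem:domainsets}. Your explicit telescoping that unpacks the lemma's condition ``$\alpha x = wy \in L$'' into ``$a_i y \in L$ for all $i$ and $wy \in L$'' is exactly the step the paper leaves implicit in its one-line appeal to that lemma, so you have simply filled in detail rather than taken a different path.
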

\begin{proof}
Note that $x \in \dom \varphi$ if and only if $y \in L$ with $x = vy$ for some $y \in L$ for which $a_i y \neq 0$ for $1 \leq i \leq n$ and $wy \neq 0$. By Lemma \ref{lem:domainsets}, $D_{\alpha} = \dom \varphi$.
\end{proof}

We can now prove the main theorem of the section which characterizes the inverse semigroups that are isomorphic to inverse hulls of Markov subshifts.

\begin{thm}\label{thm:main} Let $H$ be an inverse semigroup with $0$. Then $H$ is isomorphic to the inverse hull of a Markov subshift if and only if
\begin{enumerate}
\item $H$ is combinatorial,
\item there is a set $\mcO$ of nonzero idempotents in $H$ satisfying $(O1) - (O5)$, and 
\item the language, $L$, associated with $\mcO$ generates $H$.
\end{enumerate}
\end{thm}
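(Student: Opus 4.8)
The plan is to prove the two directions of the biconditional separately, leveraging essentially all the machinery already assembled in Sections 3 and 4.

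For the forward direction, suppose $H \cong H(S_T)$ for some Markov transition matrix $T$. Then I would transport the structure of $H(S_T)$ across the isomorphism and verify the three conditions directly. Condition (1) is immediate from the Proposition asserting that $H(S_T)$ is combinatorial. For condition (2), I would take $\mcO = \{\theta_a\theta_a^{-1} : a \in A\}$ (or its isomorphic image). The properties (O1)--(O5) are exactly the collection of observations recorded after Proposition \ref{prop:idempotents}: (O1) is the mutual orthogonality of the range idempotents $\theta_a\theta_a^{-1}$; (O2) is the remark that every idempotent of $H(S_T)$ is comparable to some element of $\mcO$; (O3) is the pair of closure statements for $\mcO^{\up}\cup\{0\}$ and $(\mcO^{\up}-\mcO)\cup\{0\}$; (O4) is precisely the content of the Proposition following Proposition \ref{prop:idempotents} (unique determination by idempotents below in $\mcO$); and (O5) is the first assertion of Proposition \ref{prop:Dclass}. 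Finally, condition (3) holds because $H(S_T)$ is by definition generated by $\{\theta_w : w \in L_T\}$, and since $\theta_w = \theta_{a_1}\cdots\theta_{a_n}$ for $w = a_1\cdots a_n$, the generators lie in the inverse subsemigroup generated by the letters; under the identification $A \leftrightarrow \{\theta_a : a \in A\}$ furnished by Theorem \ref{thm:language}, these letters are exactly the set $A$ defined from $\mcO$, so $L$ generates $H$.

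For the converse, assume $H$ is combinatorial, carries a set $\mcO$ satisfying (O1)--(O5), and that $L$ generates $H$. By Theorem \ref{thm:language}, the subsemigroup $L \cup \{0\}$ is isomorphic to $S_T = L_T \cup \{0\}$ for the transition matrix $T$ defined by $T(a,b)=1 \iff bb^* \leq a^*a$. The goal is to upgrade this to an isomorphism $H \cong H(S_T)$. I would define the candidate isomorphism $\Phi\colon H \to H(S_T)$ on a general element in normal form via
\[
\Phi\bigl(w\, a_1^* a_1 \cdots a_n^* a_n\, v^*\bigr) = \theta_w \theta_{a_1}^{-1}\theta_{a_1}\cdots\theta_{a_n}^{-1}\theta_{a_n}\theta_v^{-1},
\]
using Proposition \ref{prop:generalnormalform} to guarantee every nonzero element has such an expression and sending $0 \mapsto 0$. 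The central difficulty, and the reason the preceding results were developed, is showing $\Phi$ is well defined and injective. Proposition \ref{prop:domain} identifies $D_\alpha$ with $\dom\varphi$ where $\varphi = \Phi(\alpha)$, and Lemma \ref{lem:domainsets} shows $\alpha$ and $\varphi$ agree as partial maps pointwise on this common domain (via the identification $L \cong L_T$). Hence two elements of $H$ have the same image under $\Phi$ exactly when they have the same domain sets $D_\alpha = D_\beta$ and agree on them; since $H$ is right reductive relative to $L$ (the previous Proposition), this forces the two elements of $H$ to be equal, establishing that $\Phi$ is well defined on normal forms and injective.

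The remaining steps are more routine. Surjectivity follows because every generator $\theta_a$ of $H(S_T)$ is the image of the corresponding letter $a \in A$, so $\Phi$ hits a generating set; and that $\Phi$ is a homomorphism follows by comparing products in normal form, where the multiplication rules in $H$ (Corollary \ref{cor:generalproducts}) match those in $H(S_T)$ (Lemma \ref{lem:hullproducts}) term by term. I expect the main obstacle to be bookkeeping the well-definedness argument cleanly: normal forms are not unique, so the correct framing is to show that $\Phi(\alpha)$ depends only on $\alpha$ (not on the chosen normal form) precisely because $\Phi(\alpha)$ is pinned down as the unique partial bijection with domain $D_\alpha$ acting as $\alpha$ does, which is where right reductivity does the decisive work.
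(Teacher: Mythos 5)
Your proposal matches the paper's proof essentially step for step: the forward direction cites the Section 3 results for (1)--(3), and the converse defines the same map $\alpha = w a_1^* a_1 \cdots a_n^* a_n v^* \mapsto \theta_w \theta_{a_1}^{-1}\theta_{a_1}\cdots\theta_{a_n}^{-1}\theta_{a_n}\theta_v^{-1}$, with well-definedness via Proposition \ref{prop:domain} and Lemma \ref{lem:domainsets}, injectivity via right reductivity, surjectivity from generation, and the homomorphism property by matching Corollary \ref{cor:generalproducts} against Lemma \ref{lem:hullproducts} case by case. One small attribution slip worth noting: right reductivity is what drives \emph{injectivity}, while well-definedness needs no such hypothesis, since in $H(L^0)$ a partial bijection is already determined by its domain and action --- a point your final paragraph actually states correctly before crediting it to right reductivity.
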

\begin{proof}
First, it was proved in section 3 that the inverse hull of a Markov subshift satisfies properties (1), (2), and (3). Next let $H$ satisfy properties (1), (2), and (3). Then $L$ is the language of a Markov subshift by Theorem \ref{thm:language}. We would like to define a map $\Gamma : H \to H(L^{0})$ by sending $0$ to $0$ and $\alpha = w a_1^* a_1 \cdots a_n^* a_n v^*$ to $\theta_w \theta^{-1}_{a_1}\theta_{a_1}\cdots\theta^{-1}_{a_n}\theta_{a_n}\theta^{-1}_v$. First observe that this map is well-defined. Indeed, if $\alpha = w a_1^* a_1 \cdots a_n^* a_n v^* = s b_1^* b_1 \cdots b_n^* b_n t^*$ then 
\[
\dom \theta_w \theta^{-1}_{a_1}\theta_{a_1}\cdots\theta^{-1}_{a_n}\theta_{a_n}\theta^{-1}_v = D_{\alpha} = \dom \theta_s \theta^{-1}_{b_1}\theta_{b_1}\cdots\theta^{-1}_{b_n}\theta_{b_n}\theta^{-1}_t
\]
by Proposition \ref{prop:domain}. Moreover, by Lemma \ref{lem:domainsets}, for each $x \in D_{\alpha},$
\[
\theta_w \theta^{-1}_{a_1}\theta_{a_1}\cdots\theta^{-1}_{a_n}\theta_{a_n}\theta^{-1}_v(x) = wx = \alpha x = sx = \theta_s \theta^{-1}_{b_1}\theta_{b_1}\cdots\theta^{-1}_{b_n}\theta_{b_n}\theta^{-1}_t(x).	
\]
Thus the map $\Gamma$ is well-defined. Suppose that $\Gamma(\alpha) = \Gamma(\beta)$. Then $D_{\alpha} = \dom \Gamma(\alpha) = \dom \Gamma(\beta) = D_{\beta}$ and $\alpha x = \beta x$ for all $x \in D_{\alpha}$. Since $H$ is right reductive on $L$, $\alpha = \beta$. Thus $\Gamma$ is injective. It is clear from the definition of $H(L^{0})$ that $\Gamma$ is surjective.

Finally we need to show that $\Gamma$ is a homomorphism. Let 
\[ 
\alpha = w a_1^* a_1 \cdots a_n^* a_n v^* \text{ and } \beta = s b_1^* b_1 \cdots b_n^* b_n t^*
\] 
be nonzero. As in Corollary \ref{cor:generalproducts}, we have four cases to consider. We examine one such case; the others are similar. Suppose that $v = su$ for some $u \in L$. Then either $\alpha \beta = 0$ or
\begin{align*}
	\alpha \beta &= w a_1^* a_1 \cdots a_n^* a_n (v^* s) b_1^* b_1 \cdots b_n^* b_n t^* \\
				 &= w a_1^* a_1 \cdots a_n^* a_n (u^* b_1^* b_1) \cdots b_n^* b_n t^* \\
				 &= w a_1^* a_1 \cdots a_n^* a_n (u^* b_2^* b_2) \cdots b_n^* b_n t^* \\
				 &\vdots \\
				 &= w a_1^* a_1 \cdots a_n^* a_n (tu)^*
\end{align*}
By Lemma \ref{lem:hullproducts} we the computation of $\Gamma(\alpha) \Gamma(\beta)$ is the same in $H(L^0)$, and so
\[
	\Gamma(\alpha) \Gamma(\beta) = \theta_{w}\theta_{a_1}\cdots\theta^{-1}_{a_n}\theta_{a_n}\theta^{-1}_{tu}
\]
In the other 3 cases described in Corollary \ref{cor:generalproducts}, we can similarly show that $\Gamma$ is a homomorphism. Thus $H \cong H(L^{0})$.
\end{proof}

\section{An application}
In this section we consider whether there are two languages associated with different Markov shifts that generate isomorphic inverse hulls. We find languages $L_1$ and $L_2$ such that the associated Markov shifts are not conjugate, yet $H(L_1) \cong H(L_2)$. The proof relies on the characterization given in the last section which allows us to choose distinct sets $\mcO_1$ and $\mcO_2$ of idempotents satisfying axioms (O1)--(O5) from a single inverse semigroup $H$. The resulting Markov shifts are easily distinguished by their entropies.

Consider the Markov subshift generated by the following transition matrix with alphabet $A_1=\{a,b,c$\}: 
\[T_1 = 
\kbordermatrix{
  	  & a & b & c \\
    a & 1 & 1 & 1 \\
    b & 1 & 0 & 1 \\
    c & 0 & 1 & 0 
  }
\]

Let $H = H(L_1)$ be the associated inverse hull, where $L_1$ is the language of the shift. Below is the top of the semilattice of $H$, with $\mcO_1$ indicated in orange text. 
\begin{center} \begin{tikzpicture}[scale=1]
\node (10) at (0,2) {$\theta_a^{-1}\theta_a$};
\node (20) at (-1.5,1) {$\theta_b^{-1}\theta_b$};
\node (21) at (1.5,1) {\underline{$\theta_c^{-1}\theta_c$}};
\node(30) at (-3, 0) {\underline{\textcolor{orange}{$\theta_a\theta_a^{-1}$}}};
\node (31) at (0, 0) {\textcolor{orange}{$\theta_c\theta_c^{-1}$}};
\node (32) at (3, 0) {\textcolor{orange}{$\theta_b\theta_b^{-1}$}};
\node (40) at (-4.5, -1) {$\theta_a\theta_b^{-1}\theta_b\theta_a^{-1}$};
\node (41) at (-2, -1) {$\theta_a\theta_c^{-1}\theta_c\theta_a^{-1}$};
\node (50) at (-5.5, -2) {$\theta_{aa}\theta_{aa}^{-1}$};
\node (51) at (-3.5, -2) {$\theta_{ac}\theta_{ac}^{-1}$};
\node (52) at (-2, -2) {$\theta_{ab}\theta_{ab}^{-1}$};
\node (53) at (0, -2) {\underline{$\theta_{cb}\theta_{cb}^{-1}$}};
\node (54) at (2, -2) {$\theta_{ba}\theta_{ba}^{-1}$};
\node (55) at (4, -2) {$\theta_{bc}\theta_{bc}^{-1}$};
\draw (21) -- (10) -- (20);
\draw (30) -- (20) -- (31);
\draw (21) -- (32);
\draw (40) -- (30) -- (41);
\draw (50) -- (40) -- (51); 
\draw (41) -- (52);
\draw (31) -- (53);
\draw (54) -- (32) -- (55);
\end{tikzpicture}
\end{center}

Next, let $\mcO_2 =  \{ \theta_a \theta_a^{-1}, \theta_{cb}\theta_{cb}^{-1}, \theta_{c}^{-1}\theta_{c} \}$, the set of underlined idempotents in the above figure. We can verify that $(H,\mcO_2)$ satisfies the axioms of Theorem \ref{thm:main}. The elements of $\mcO_2$ are mutually orthogonal. For example, $\theta_a \theta_{a}^{-1} \theta_c^{-1} \theta_c \leq \theta_a \theta_{a}^{-1} \theta_b \theta_{b}^{-1} = 0$. For (O2), the fact that every idempotent in $H$ is comparable to at least one element in $\mcO_1$ can be used to verify that the same property holds for $\mcO_2$. For (O3), we must show that both ${(\mathcal{O}_2^{\uparrow}-\mathcal{O}_2) \cup \{0\}}$ and $\mathcal{O}_2^{\uparrow} \cup \{0\}$ are closed under multiplication. For 
\[
(\mathcal{O}_2^{\uparrow}-\mathcal{O}_2) \cup \{0\} = \{\theta_b^{-1}\theta_b,\theta_a^{-1}\theta_a, \theta_c\theta_c^{-1}, 0\}
\] 
it suffices to check the three products: 
\[
\theta_a^{-1}\theta_a \theta_b^{-1}\theta_b = \theta_b^{-1}\theta_b, \; \theta_a^{-1}\theta_a \theta_c \theta_c^{-1} = \theta_c \theta_c^{-1}, \,\text{and } \theta_b^{-1}\theta_b \theta_c \theta_c^{-1} = \theta_c \theta_c^{-1}. 
\]

Similarly, $\mathcal{O}_2^{\uparrow} \cup \{0\}$ is closed under multiplication. Next, we check property (O4). Notice that $\theta_a^{-1}\theta_a$ lies above the set $\{ \theta_a \theta_a^{-1}, \theta_{cb}\theta_{cb}^{-1}, \theta_{c}^{-1}\theta_{c} \}$, $\theta_b^{-1}\theta_b$ lies above $\{ \theta_a \theta_a^{-1}, \theta_{cb}\theta_{cb}^{-1}\}$, and $\theta_c \theta_c^{-1}$ lies above $\{ \theta_{cb}\theta_{cb}^{-1}\}$. Since these three sets are distinct, (O4) is satisfied. We know that $\theta_a^{-1}\theta_a$, $\theta_b^{-1}\theta_b$ and $\theta_c^{-1}\theta_c$ are in distinct $\mathcal{D}$-classes by Proposition \ref{prop:Dclass}. Since $\theta_c \theta_c^{-1}
 \mcD \theta_c^{-1}\theta_c$, we see that (O5) is satisfied. The last property we must check is that the language $L_2$ defined by $\mcO_2$ generates $H$ as an inverse semigroup. For this it suffices to recover the generators $\theta_a, \theta_b, \theta_c$ as products involving letters in $A_2 = \{ \theta_a, \theta_{cb}, \theta_c^{-1} \}$ and their inverses. Notice that $\theta_b = \theta_c^{-1} \theta_{cb}$ and $\theta_c = (\theta_c^{-1})^{-1}$.
 
Thus by Thereom \ref{thm:main} we have $H = H(L_1) \cong H(L_2)$. We may recover the Markov transition matrix for $L_2$ using the method given just before Theorem \ref{thm:language}. For convenience let, $x = \theta_a, y = \theta_{cb},$ and $z = \theta_c^{-1}$. We find the matrix to be 
\[T_2 = 
\kbordermatrix{
  	  & x & y & z \\
    x & 1 & 1 & 1 \\
    y & 1 & 1 & 0 \\
    z & 0 & 1 & 0 
  }
\]
The dominant eigenvalues of $T_1$ and $T_2$ are $2$ and approximately $2.206$, respectively. Therefore, by \cite[Theorem 4.3.1]{LindMarcus}, the associated Markov shifts have distinct entropies and are not conjugate.

Conversely, we can quickly verify that conjugate shifts need not have isomorphic inverse hulls. For example, consider the shifts associated with the following transition matrices
\[
  \kbordermatrix{
  	  & a & b & c \\
    a & 1 & 1 & 0 \\
    b & 0 & 0 & 1 \\
    c & 1 & 1 & 1 
  }
  \quad \quad
   \kbordermatrix{
  	  & x & y \\
    x & 1 & 1 \\
    y & 1 & 1  
  }
\]
which are shown to define conjugate shifts in Example 7.2.2 of \cite{LindMarcus}.

One can verify by inspection that the semilattices of the associated inverse hulls are distinct:
\begin{center} \begin{tikzpicture}[scale=1]
\node (10) at (-3,2) {$\theta_c^{-1}\theta_c$};
\node (11) at (3,2) {$\theta_x^{-1}\theta_x=\theta_y^{-1}\theta_y$};
\node (20) at (-4,1) {$\theta_a^{-1}\theta_a$};
\node (21) at (-2,1) {$\theta_b^{-1}\theta_b$};
\node (22) at (2,1) {\textcolor{orange}{$\theta_x\theta_x^{-1}$}};
\node (23) at (4,1) {\textcolor{orange}{$\theta_y\theta_y^{-1}$}};
\node (30) at (-5,0) {\textcolor{orange}{$\theta_a\theta_a^{-1}$}};
\node (31) at (-3,0) {\textcolor{orange}{$\theta_b\theta_b^{-1}$}};
\node (32) at (-1,0) {\textcolor{orange}{$\theta_c\theta_c^{-1}$}};
\node (33) at (.5,0) {$\theta_{xx}^{-1}\theta_{xx}$};
\node (34) at (2,0) {$\theta_{xy}^{-1}\theta_{xy}$};
\node (35) at (4,0) {$\theta_{yx}^{-1}\theta_{yx}$};
\node (36) at (5.5,0) {$\theta_{yy}^{-1}\theta_{yy}$};
\draw (20) -- (10) -- (21);
\draw (22) -- (11) -- (23);
\draw (30) -- (20) -- (31);
\draw (32) -- (21);
\draw (33) -- (22) -- (34);
\draw (35) -- (23) -- (36);
\end{tikzpicture}
\end{center}

In fact, after examining many such examples, we conjecture that isomorphic inverse hulls of Markov shifts must have associated alphabets that are the same size.

\bibliographystyle{amsplain}
\bibliography{SemigroupBib.bib}

\providecommand{\bysame}{\leavevmode\hbox to3em{\hrulefill}\thinspace}
\providecommand{\MR}{\relax\ifhmode\unskip\space\fi MR }
\providecommand{\MRhref}[2]{%
  \href{http://www.ams.org/mathscinet-getitem?mr=#1}{#2}
}
\providecommand{\href}[2]{#2}
\begin{thebibliography}{1}

\bibitem{ExelSteinberg}
R.~Exel and B.~Steinberg, \emph{Representations of the inverse hull of a
  $0$-left cancellative semigroup}, arXiv:1802.06281, February 2018.

\bibitem{LindMarcus}
Douglas Lind and Brian Marcus, \emph{An introduction to symbolic dynamics and
  coding}, Cambridge University Press, Cambridge, 1995.

\bibitem{StarlingShifts}
Charles Starling, \emph{Inverse semigroups associated to subshifts}, Journal of
  Algebra \textbf{463} (2016), 211--233.

\end{thebibliography}
\end{document}